\theoremstyle{plain}
\newtheorem{theorem}{Theorem}[section]
\newtheorem{definition}[theorem]{Definition}
\newtheorem{lemma}[theorem]{Lemma}
\newtheorem{prop}[theorem]{Proposition}
\newtheorem{rem}[theorem]{Remark}
\newtheorem{ex}[theorem]{Example}
\newcommand\C{{\mathbb C}}
\newcommand\Q{{\mathbb Q}}
\newcommand\R{{\mathbb R}}
\newcommand\Z{{\mathbb Z}}
\renewcommand\H{{\mathbb H}}
\newcommand\T{{\mathbb T}} 
\newcommand\trace{{\rm tr}}
\newcommand{\alt}{\raise1pt\hbox{$\bigwedge$}}
\newcommand{\w}[1]{\om^{#1}}
\newcommand{\sq}{\kern.5pt\square}
\newcommand{\adj}{\mathop{\mathrm{adj}}}
\renewcommand{\setminus}{\smallsetminus}
\newcommand{\tr}{\mathop{\mathrm{tr}}}
\def\1{{\ol1}}\def\2{{\ol2}}\def\3{{\ol3}}
\newcommand{\ba}{\begin{array}}
\newcommand{\ea}{\end{array}}
\newcommand{\be}[1]{\begin{equation}\label{#1}}
\newcommand{\ee}{\end{equation}}
\newcommand{\ol}{\overline}
\newcommand{\we}{\wedge}
\newcommand{\bs}{\backslash}
\newcommand{\om}{\omega}
\newcommand{\Ga}{\Gamma}
\newcommand{\La}{\Lambda}
\newcommand{\X}{\mathbf{X}}
\begin{document}
\title[On astheno-K\"ahler metrics]{On astheno-K\"ahler metrics}
\author{Anna Fino and Adriano Tomassini}
\date{\today}
\address{Dipartimento di Matematica \\ Universit\`a di Torino\\
Via Carlo Alberto 10\\
10123 Torino\\ Italy} \email{annamaria.fino@unito.it}
\address{Dipartimento di Matematica\\ Universit\`a di Parma\\ Parco Area delle Scienze 53/A\\
43124 Parma\\ Italy} \email{adriano.tomassini@unipr.it}
\subjclass[2000]{53C55,
53C25, 32C10}
\keywords{astheno-K\" ahler, strong K\"ahler with torsion, deformation, nilmanifold}
\thanks{This work was supported by the Projects MIUR ``Riemannian Metrics and Differentiable Manifolds'',
``Geometric Properties of Real and Complex Manifolds'' and by GNSAGA
of INdAM}
\begin{abstract} A Hermitian metric on a complex manifold
of complex dimension $n$
is called {\em astheno-K\"ahler}
if its fundamental $2$-form $F$ satisfies the condition $\partial
\overline \partial F^{n - 2} =0$. If $n =3$, then the metric is {\em strong KT}, i.e. $F$ is $\partial
\overline \partial$-closed. By using
blow-ups and the twist construction, we construct simply-connected
astheno-K\"ahler manifolds of complex dimension $n > 3$. Moreover, we construct a family of
astheno-K\"ahler (non strong KT) $2$-step nilmanifolds of complex dimension $4$ and we study deformations of strong KT structures on nilmanifolds of complex dimension $3$.

Finally, we study the relation between astheno-K\"ahler
condition and (locally) conformally balanced one and we provide
examples of locally conformally balanced
astheno-K\"ahler metrics on $\T^2$-bundles over (non-K\"ahler) homogeneous complex surfaces. \end{abstract}
\maketitle
\section{Introduction}
Let $(M,J,g)$ be a Hermitian manifold of complex dimension $n$. By \cite{Ga2} there exists
a one-parameter family of canonical Hermitian connections
$$\nabla^t = t \nabla^C + (1 - t) \nabla^0,
$$ where $\nabla^C$ and $\nabla^0$ denote the {\em Chern
connection} and the {\em first canonical connection} respectively. This family includes for $t = -1$ the
{\em Bismut connection} $\nabla^B $ considered by  J.M. Bismut in \cite{Bi}.

If the fundamental $2$-form $F (\cdot, \cdot) = g(J \cdot, \cdot)$ is closed, then the metric
$g$ is K\"ahler and any connection $\nabla^t$ in the above family coincides with the
Levi-Civita connection. In the literature, weaker conditions on $F$ have been studied and they involve
the closure with respect to the $\partial \overline \partial$-operator of the
$(k , k)$-form $F^k = F \wedge \cdots \wedge F$. Some of these conditions are characterized by some
properties of either the Chern or Bismut connection.

More precisely, if $\partial \overline \partial F =0$, then the
Hermitian structure $(J, g)$ is said to be {\em strong K\"ahler with torsion} and
$g$ is called {\em strong KT} (see e.g. \cite{GHR}). In this case the Hermitian structure is characterized
by the condition that the Bismut connection has skew-symmetric
torsion. Strong KT metrics have been recently studied by many
authors and they have also applications in type II string theory
and in 2-dimensional supersymmetric $\sigma$-models
\cite{GHR,Str, IP}. Moreover, they have also links with generalized
K\"ahler structures (see for instance
\cite{GHR,Gu,Hi2,AG,FPS,FT}). New simply-connected strong KT examples have been recently
constructed by A. Swann in \cite{Sw} via the twist construction, by reproducing the $6$-dimensional examples
found previously in \cite{GGP}.

If $\partial \overline \partial F^{n - 2} =0$, then in the
terminology of J. Jost and S.-T. Yau (\cite{JY,LYZ}) the Hermitian
metric $g$ on $M$ is said to be {\em astheno-K\"ahler}. Therefore,
on a complex surface any Hermitian metric is automatically astheno-K\"ahler and in
complex dimension $n =3$ the notion of astheno-K\"ahler metric
coincides with that one of strong KT.
For $n > 3$, as far as we
know, not many results and examples of astheno-K\"ahler manifolds are
known.

Some rigidity theorems concerning compact
astheno-K\"ahler manifolds have been showed in \cite[Theorem
6]{JY} and in \cite{LYZ}, where, in particular, a generalization
to higher dimension of the Bogomolov's Theorem on $VII_0$ surfaces
is proved (see \cite[Corollary 3]{LYZ}). Astheno-K\"ahler structures on Calabi-Eckmann manifolds have been constructed in \cite{Matsuo}.

In \cite{FT2} we proved that the
blow-up of a complex manifold $M$ at
a point or along a compact complex submanifold $Y$ is still strong KT, as in the K\"ahler case
(see for example \cite{Bl}).
In Section \ref{blowup} we will show that the results of \cite{FT2} about resolutions of strong KT
orbifolds can be extended to Hermitian orbifolds satisfying the conditions
\begin{equation}\label{specialastheno}
\partial\overline{\partial}\,F=0\,,\quad \partial\overline{\partial}\,F^2=0.
\end{equation}
We will show that these manifolds satisfy  $\partial \overline \partial F^k = 0$ for all 
$k > 1$ and therefore they  are a proper subset of the astheno-K\"ahler 
manifolds.

As an application, we will construct a simply-connected
example of compact astheno-K\"ahler manifold satisfying previous
conditions. Moreover, we will show that other $8$-dimensional examples may be obtained by
applying the twist construction of \cite{Sw} to astheno-K\"ahler manifolds with torus action.

In complex dimension $3$ invariant astheno-K\"ahler structures on {\em nilmanifolds}, i.e. on compact quotients
of nilpotent Lie groups by uniform discrete subgroups, were studied
in \cite{FPS} showing that the existence of such a
structure depends only on the left-invariant complex structure on the Lie group. In Section \ref{blowup} we will construct a family of
astheno-K\"ahler $2$-step nilmanifolds of complex dimension $4$, showing that in
general, for $n >3$, there is
no relation between the astheno-K\"ahler and strong KT condition (Theorem \ref{asthenofamily})
and that is not anymore true that if $(J, g)$ is astheno-K\"ahler, then any other $J$-Hermitian metric $\tilde g$
is astheno-K\"ahler.

By the classification obtained in \cite{FPS} one of the strong
KT nilmanifolds is the {\em Iwasawa manifold}. In contrast with the
Kodaira-Spencer stability theorem \cite{KodS} and the case of
complex surfaces, in \cite{FT2} we proved that on the Iwasawa
manifold the condition strong KT is not stable under small
deformations of the complex structure. Deformations of complex
structures on nilmanifolds have been studied in \cite{Sal, CF,
MPPS,CFP} and recently S. Rollenske proved in \cite{Rollenske,Rollenske2} that,
in the generic case, small deformations of invariant complex
structures on nilmanifolds are again invariant. This result can
be applied to the strong KT $6$-dimensional nilmanifolds and then
we have that any small deformation and deformation in large of an
invariant strong KT complex structure $J_0$ on a $6$-dimensional
nilmanifold is still invariant. \newline By using the results of
\cite{KS, Ug} we will prove that the space of deformations of a
strong KT complex structure $J_0$ on a $6$-dimensional nilmanifold
for which there exists a strong KT metric is parametrized
generically by a real algebraic hypersurface of degree $4$ in
$\C^4$ through the origin (Theorem \ref{hypersurface}).
Furthermore, we show that the origin is non singular (respectively
singular) according to the fact that $J_0$ is non abelian
(respectively abelian).

If $F^{n -1}$ is
$\partial \overline\partial$-closed or equivalently if its Lee form is co-closed, then the Hermitian metric $g$ is
called {\emph {standard}} or a {\em Gauduchon metric} \cite{Ga}. The Hermitian structure is said
to be {\em balanced} if its Lee form vanishes and {\em conformally balanced} if its Lee form is exact.
Astheno-K\"ahler and
strong KT metrics on compact complex manifolds cannot be balanced for $n > 2$ unless they are
K\"ahler (see \cite{MT,AI}). Moreover, by \cite{IP,P} a conformally balanced strong KT structure
on a compact manifod of complex dimension $n$ whose Bismut connection has (restricted)
holonomy contained in $SU(n)$ is necessarily K\" ahler.\newline
We will
prove a similar result for the astheno-K\"ahler metrics (Theorem \ref{confbalastheno})
and we will show that any non-K\"ahler compact homogeneous complex surface admits a
non-trivial compact $\T^2$-bundle
$M$ carrying an astheno-K\"ahler metric whose Lee form is closed. In
the case of the ${\mathbb T}^2$-bundle over the secondary Kodaira surface
we will obtain a \lq\lq locally conformal solution\rq\rq of the Strominger's
system considered in \cite{Str}.

\medskip \noindent {\em{Acknowledgements}}. We would like to
thank Gueo Grantcharov and Simon Salamon for useful comments and conversations. We are also grateful to
CIRM-FBK and to the Department of Mathematics of Trento for their warm hospitality. We also would like to thank the referee for valuable  
remarks  which  improved the contents of the paper.

\section{Astheno-K\"ahler manifolds} \label{blowup}
Let $(M, J)$ be a complex manifold of complex dimension $n$. Following
Jost and Yau (see \cite{JY}), we recall the following
\begin{definition}
A Hermitian metric
$g$ on $(M, J)$ is said to be {\em astheno-K\"ahler} if its fundamental
$2$-form form $F$ satisfies the condition
$$
\partial\overline{\partial}F^{n-2}=0.
$$
\end{definition}
Thus, by definition, any Hermitian metric on a complex surface is
astheno-K\"ahler and in complex dimension $3$, an
astheno-K\"ahler structure means a strong KT metric.
\begin{rem} {\rm The product of
two strong KT manifolds is still strong KT. This property is not true anymore
for astheno-K\"ahler metrics. Indeed,
for instance  the product metric on the product of the Hopf surface and the Kodaira-Thurston
surface is strong KT but it is not astheno-K\"ahler.}
\end{rem}

If $n > 3$, then the condition $
\partial\overline{\partial}F^{n-2}=0
$ is equivalent to
$$
d (c \wedge F^{n -3}) =0\,,
$$
where $c = - J dF$ is the torsion $3$-form of the Bismut connection.

Note that, in general, if a Hermitian manifold $(M, J, g)$ satisfies the conditions \eqref{specialastheno}
then one has $\partial \overline \partial\, F^k=0$, for any $k
\geq 1$ and in particular $g$ is astheno-K\"ahler, strong KT and
standard.
This follows by
$$
\partial \overline \partial F^k =k\,\partial\left(\overline{\partial}F\wedge F^{k - 1}\right)=
k\left(\partial\overline{\partial}\,F \wedge F - (k - 1) \overline{\partial}\,F\wedge\partial\,F \right) \wedge F^{k -2}, \quad k > 1\,.$$
Indeed, if \eqref{specialastheno} holds, then
$\partial F \wedge\overline{\partial} F =0$ and therefore any $F^k$ is $\partial \overline \partial$-closed.

Following \cite{Ga} we recall that a Hermitian metric
$g$ on  $(M, J)$ is said to be {\em standard} if $F^{n -1}$ is
$\partial \overline\partial$-closed. 
Then, if $n = 4$ a Hermitian metric which is at the same time strong KT
and astheno-K\"ahler metric, it must be also standard.

A necessary condition for the
existence of astheno-K\"ahler metrics on compact complex manifolds
was found in \cite[Lemma 6]{JY}, proving that any holomorphic
$1$-form must be $d$-closed.

We will provide a compact complex $3$-dimensional manifold satisfying the previous condition on
holomorphic $1$-forms, with no astheno-K\"ahler metrics.

We start to note that, by using similar methods to those ones used in \cite[Theorem 2.2]{FG} and
in \cite[Prop. 21]{Ug} in the context of
strong KT geometry, it is possible to show that if $M$ is a compact quotient $M=\Gamma \backslash G$ of a simply-connected Lie
group $G$ by a uniform discrete subgroup $\Gamma$, endowed with an invariant complex structure $J$ and having
no invariant astheno-K\"ahler $J$-Hermitian metrics, then $M$ does not admit any astheno-K\"ahler
$J$-Hermitian metric at all.
\begin{ex} {\rm Let us consider the $6$-dimensional nilpotent real Lie algebra $\mathfrak{g}$ with structure equations
$$
(0,0,0,0,0,e^{12}+e^{34}),
$$
where, with this notation, we mean that the dual space of $\mathfrak g$ is generated by
$\{e^1,\ldots, e^6\}$ satisfying $$
\left\{
\begin{array}{lll}
de^i &=&0\,,\qquad\quad i=1,\ldots ,5,\\[5pt]
de^6 &=&e^{12}+e^{34}\,,
\end{array}
\right.
$$
where $e^{ij}$ stands for $e^i \wedge e^j$.
Let $G$ be the simply-connected Lie group whose Lie algebra is $\mathfrak{g}$ and set
$$
\eta^j = e^{2j-1} + ie^{2j}\,,\qquad j=1,2,3\,.
$$
Then $\{\eta^1,\eta^2,\eta^3\}$ are complex $(1,0)$-forms that define a left-invariant rational
complex structure $J$ on the nilmanifold
$M=\Gamma\backslash G$, where $\Gamma$ is a co-compact discrete subgroup of $G$ such that $J (\Gamma) \subset \Gamma$. \newline
In view of \cite[Theorem 3.2]{FPS}, there are no strong KT metrics on
$(M,J)$. On the other hand, by \cite[Theorem 2]{CF}, it turns out
that the Dolbeault cohomology group $H^{1,0}_{\overline{\partial}}(M)$ is spanned by
$\{\eta^1,\eta^2\}$. Therefore, any holomorphic $1$-form on $M$ is $d$-closed.
}
\end{ex}
\subsection{Examples by blow-ups and resolutions} The proof of the result by \cite{FT2} about
the blow-up of a strong KT manifold at a point or along a compact complex
 submanifold can be adapted to  the class of Hermitian manifolds whose fundamental $2$-form $F$  satisfies the
conditions \eqref{specialastheno}, since in both cases the new fundamental $2$-form on the blow-up
is obtained by adding a $d$-closed form
to a $\partial\overline{\partial}$-closed form. The Hermitian manifolds satisfying  \eqref{specialastheno} are  exactly  equivalent to those which   satisfy $\partial \overline \partial  F^k = 0$ for all $k \geq 1$ and therefore such manifolds are a proper subset of the astheno-K\"ahler manifolds.

Then one can prove the following
\begin{prop}\label{blowuppoint}
Let $(M,J,g)$ be an astheno-K\"ahler manifold of complex dimension $n$ such that its fundamental $2$-form $F$
satisfies
\eqref{specialastheno}. Then both the blow-up $\tilde M_p$ of $M$ at a point
$p\in M$ and the blow-up $\tilde M_Y$ of $M$ along a compact complex submanifold $Y$ admit an astheno-K\"ahler
metric satisfying \eqref{specialastheno} too.
\end{prop}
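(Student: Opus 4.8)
The plan is to mimic the blow-up construction for Kähler metrics (\cite{Bl}) and for strong KT metrics (\cite{FT2}), the only new point being to check that the second equation in \eqref{specialastheno} is preserved. Write $\pi\colon\tilde M\to M$ for the blow-up ($\tilde M_p$ at $p$, or $\tilde M_Y$ along $Y$) and let $E$ be the exceptional divisor. Set $G:=\pi^*F$. Since $\pi^*$ commutes with $\partial$ and $\overline\partial$, and $G^2=\pi^*(F^2)$, the hypotheses \eqref{specialastheno} give $\partial\overline\partial G=\pi^*(\partial\overline\partial F)=0$ and $\partial\overline\partial G^2=\pi^*(\partial\overline\partial F^2)=0$. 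The form $G$ is real of type $(1,1)$ but only semi-positive: one computes $\pi^*F(v,Jv)=|\pi_*v|^2\ge 0$, so $G$ degenerates exactly along the fibre directions of $E$ (it vanishes on vectors tangent to the fibres of $E\to p$, resp. $E\to Y$).

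Next I would add a correcting term. Consider on $\tilde M$ the line bundle $\mathcal{O}(-E)$ dual to the one defined by the exceptional divisor; its restriction to $E$ is positive along the fibres. Fixing a Hermitian fibre metric whose Chern curvature $\theta$ is positive in the fibre directions on a neighbourhood of $E$, one obtains a globally defined real $(1,1)$-form $\theta$ which, being a curvature form, is $d$-closed, i.e. $\partial\theta=\overline\partial\theta=0$. I then set $\tilde F:=\pi^*F+\epsilon\,\theta=G+\epsilon\,\theta$. The main (although classical) point is the positivity: for $\epsilon>0$ sufficiently small $\tilde F$ is positive definite, hence the fundamental $2$-form of a Hermitian metric $\tilde g$ on $\tilde M$. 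Indeed, away from $E$ the map $\pi$ is a biholomorphism and $\pi^*F$ is already positive definite, while near $E$ the term $\epsilon\theta$ is positive exactly in the directions along which $\pi^*F$ collapses. This is the step I expect to require the most care, but it is carried out verbatim as in the Kähler and strong KT cases, so I would simply invoke those arguments.

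It remains to verify \eqref{specialastheno} for $\tilde F$; this is where the $d$-closedness of $\theta$ is decisive, and it is the genuinely new content compared with \cite{FT2}. First, $\partial\overline\partial\tilde F=\partial\overline\partial G+\epsilon\,\partial\overline\partial\theta=0$, since $\partial\overline\partial G=0$ and $\theta$ is $d$-closed. For the second identity, expand
$$\tilde F^2=G^2+2\epsilon\,G\wedge\theta+\epsilon^2\,\theta^2.$$
Here $\partial\overline\partial G^2=0$ by the first paragraph; $\theta^2$ is a wedge of $d$-closed forms, hence $d$-closed, so $\partial\overline\partial\theta^2=0$; and for the cross term, using $\overline\partial\theta=0$ and then $\partial\theta=0$,
$$\partial\overline\partial(G\wedge\theta)=\partial\big(\overline\partial G\wedge\theta\big)=(\partial\overline\partial G)\wedge\theta=0.$$
Hence $\partial\overline\partial\tilde F^2=0$, so $\tilde F$ satisfies \eqref{specialastheno} and, as observed right after \eqref{specialastheno}, $\tilde g$ is in particular astheno-Kähler. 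The same argument applies simultaneously to $\tilde M_p$ and $\tilde M_Y$, the only difference being whether the fibre of $E$ lies over a point or over $Y$.
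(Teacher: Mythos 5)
Your argument is essentially the paper's own: the authors likewise take the new fundamental form to be $\pi^*F$ plus a small multiple of a $d$-closed real $(1,1)$-form coming from the exceptional divisor (quoting \cite{FT2} and the K\"ahler case for the positivity step), and justify the proposition by observing that in both cases one is adding a $d$-closed form to a $\partial\overline\partial$-closed one. Your explicit verification that $\partial\overline\partial\tilde F^2=0$ — in particular the cross term $\partial\overline\partial(G\wedge\theta)=0$ — is exactly the point the paper leaves implicit, and it is correct.
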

Thus by Proposition \ref{blowuppoint} it is possible to construct
new examples of astheno-K\"ahler manifolds by blowing-up a given
astheno-K\"ahler manifold $M$ (satisfying \eqref{specialastheno})
at one or more points or along a compact complex submanifold.
\newline Moreover, one may resolve singularities of a complex
orbifold endowed with a special astheno-K\"ahler metric
(satisfying \eqref{specialastheno}). We recall that orbifolds are
a special class of singular manifolds and they have been used by
Joyce in \cite{J} to construct compact manifolds with special
holonomy, in \cite{CFM,FM} to obtain non-formal symplectic compact
manifolds and in \cite{FT2} to construct new examples of strong KT
manifolds.

One may give the following
\begin{definition}
A Hermitian metric $g$ on an $n$-dimensional complex orbifold $(M,J)$ is said to be {\em astheno-K\"ahler}
if the fundamental
$2$-form $F$ of $g$ satisfies
$$
\partial\overline{\partial}\,F^{n-2}=0\,.
$$
An {\em astheno-K\"ahler resolution} of an astheno-K\"ahler orbifold $(M,J,g)$ is the datum of a
smooth complex
manifold $(\tilde{M},\tilde{J})$ endowed with a $\tilde J$-Hermitian astheno-K\"ahler metric $\tilde{g}$  and of a map $\pi :\tilde{M}\to M$, such that
\begin{enumerate}
\item[i)]$\pi :\tilde{M}\setminus E\to M\setminus S$ is a biholomorphism, where $S$ is the singular set of $M$
and $E=\pi^{-1}(S)$ is
the {\em exceptional set};\\
\item[ii)] $\tilde{g} =\pi^*g$ on the complement of a neighborhood of $E$.
\end{enumerate}
\end{definition}
As in \cite{FT2}, we can apply Hironaka
Resolution of Singularities Theorem \cite{Hiro}, for which the singularities can be
resolved by a finite
number of blow-ups and we may use the previous results about blow-ups
to prove the following
\begin{theorem} \label{resolutionastheno}
Let $(M,J)$ be a complex orbifold of complex dimension $n$ endowed with a $J$-Hermitian astheno-K\"ahler
metric $g$ satisfying
\eqref{specialastheno}. Then there exists an astheno-K\"ahler resolution of $(M,J,g)$ satisfying also
\eqref{specialastheno}.
\end{theorem}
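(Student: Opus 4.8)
Theorem \ref{resolutionastheno} — plan of proof.

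The strategy is to reduce the orbifold statement to the single-blow-up result of Proposition \ref{blowuppoint} via Hironaka's theorem, exactly as in \cite{FT2}. Recall first the key structural fact established in the excerpt: for a Hermitian manifold the two conditions $\partial\overline\partial F=0$ and $\partial\overline\partial F^2=0$ together force $\partial F\wedge\overline\partial F=0$, and hence $\partial\overline\partial F^k=0$ for every $k\ge 1$. Thus the class \eqref{specialastheno} is genuinely closed under the blow-up operation, and Proposition \ref{blowuppoint} supplies, for each blow-up at a point or along a smooth center, a new fundamental form satisfying \eqref{specialastheno} again.

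First I would invoke Hironaka's Resolution of Singularities Theorem \cite{Hiro}: the singular set $S$ of the orbifold $(M,J)$ can be resolved by a finite sequence of blow-ups
$$
\tilde M = M_N \xrightarrow{\pi_N} M_{N-1}\xrightarrow{\pi_{N-1}}\cdots\xrightarrow{\pi_1} M_0 = M,
$$
where each $\pi_i$ is the blow-up of $M_{i-1}$ along a smooth complex center $Y_{i-1}$ contained in the total transform of $S$, and $\pi = \pi_1\circ\cdots\circ\pi_N$ is biholomorphic away from the exceptional set $E=\pi^{-1}(S)$. Since the orbifold is smooth away from $S$, each intermediate $M_i$ is again a complex orbifold whose singularities lie over $S$, so Proposition \ref{blowuppoint} applies verbatim at every stage.

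The inductive step is then the heart of the argument. Starting from the astheno-K\"ahler metric $g_0=g$ on $M_0$ satisfying \eqref{specialastheno}, I would apply Proposition \ref{blowuppoint} to produce on $M_1$ a fundamental form $F_1$ satisfying \eqref{specialastheno}, built by adding a suitable $d$-closed form supported near the exceptional divisor to $\pi_1^*F_0$. Iterating, one obtains at each stage a Hermitian metric $g_i$ on $M_i$ with $\partial\overline\partial F_i=0$ and $\partial\overline\partial F_i^2=0$; after $N$ steps this yields the desired astheno-K\"ahler metric $\tilde g$ on $\tilde M$. Because each correction term is supported in an arbitrarily small neighborhood of the exceptional locus created at that step, one may arrange the supports so that away from a neighborhood of $E$ the form $\tilde F$ equals $\pi^*F$, which gives property ii) of the definition; property i) is automatic from the biholomorphism part of Hironaka's theorem.

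The main obstacle I anticipate is bookkeeping the supports across the composition of blow-ups so that conditions i) and ii) hold simultaneously for the composite map $\pi$ rather than merely for each individual $\pi_i$. At each stage Proposition \ref{blowuppoint} modifies the metric only near the newly created exceptional divisor, but these divisors accumulate in the total transform of $S$; one must check that the cumulative modification stays within a neighborhood of the full exceptional set $E$ and does not disturb the region where $\tilde g=\pi^*g$ is required. This is a routine but careful patching argument — choosing nested neighborhoods shrinking fast enough at each blow-up — and it parallels exactly the corresponding verification carried out for strong KT resolutions in \cite{FT2}. The algebraic heart, namely the stability of \eqref{specialastheno} under a single blow-up, is already furnished by Proposition \ref{blowuppoint}, so no further geometric input is needed.
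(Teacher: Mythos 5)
Your proposal follows exactly the paper's own argument: the paper proves this theorem by invoking Hironaka's Resolution of Singularities Theorem to reduce to a finite sequence of blow-ups and then applying Proposition \ref{blowuppoint} at each stage, precisely as in \cite{FT2}. Your additional remarks on tracking the supports of the correction forms across the composition of blow-ups are a sensible elaboration of what the paper leaves implicit.
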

We may apply the previous theorem to the complex orbifold, quotient of the standard complex torus by an involution.
Let $\T^{2n}=\R^{2n}/\Z^{2n}$
be the standard torus and denote by $(x_1,\ldots ,x_{2n})$ global
coordinates on $\R^{2n}$. Consider the complex structure $J$ on $\T^{2n}$ defined by
\begin{equation}\label{torusholomorphic}
\left\{
\begin{array}{lll}
\eta^1 & = & dx_1 +i\left(f(x_n, x_{2n}) dx_{n}+dx_{n+1}\right)\,,\\[5pt]
\eta^j & = & dx_j +i\,dx_{n+j}\,, \quad j=2,\ldots,n,
\end{array}
\right.
\end{equation}
where $f= f (x_n, x_{2n})$ is a ${\mathcal C}^\infty$, $\Z^{2n}$-periodic and even function.\newline
Let $\sigma$ be the $J$-holomorphic involution $\sigma :\T^{2n}\to \T^{2n}$ induced
 by
$$
\sigma\left((x_1,\ldots ,x_{2n})\right)=(-x_1,\ldots ,-x_{2n}).
$$
Thus, $(M=\T^{2n}/\langle\sigma\rangle,J)$ is a complex orbifold
with singular set
$$
S=\left\{ x+\Z^{2n}\,\,\,\vert\,\,\, x\in\frac{1}{2}\Z^{2n}\right\},
$$
 which consists of $256$ points for $n = 4$.
Since
$
\sigma^*(\eta^j)=-\eta^j\,,\, j=1,\ldots ,n\,,
$
the natural Hermitian metric and the corresponding fundamental $2$-form on $\T^{2n}$
$$
g =\frac{1}{2}\sum_{j=1}^n \left(\eta^j\otimes\overline{\eta}^j+\overline{\eta}^j\otimes\eta^j\right), \quad
F=\frac{i}{2}\sum_{j=1}^n \eta^j\wedge\overline{\eta}^j
$$
are both $\sigma$-invariant and
by \cite{FT2} $g$ is strong KT. For $n > 3$, a direct computation gives
$
\partial\overline{\partial}\,F^2= -2\overline{\partial}\,F\wedge\partial\,F=0\,,
$ i.e. the metric $g$ is also astheno-K\"ahler.
According to Theorem
\ref{resolutionastheno}, now we may resolve the singularities of $\T^{2n}/\langle\sigma\rangle$
in order to obtain a
simply-connected astheno-K\"ahler manifold $\tilde{M}$. More precisely, for any singular point $p\in S$, we
take the blow-up at $p$. As in \cite{J} we deduce that the astheno-K\"ahler
resolution $\tilde{M}$ of the orbifold $\T^{2n}/\langle\sigma\rangle$ is
simply-connected.
\subsection{Examples by twist construction} We recall that in general, given a manifold $M$ with
a torus action and a principal torus bundle $P$ with connection $\theta$, if the torus
action lifts to $P$ commuting with the principal action, then one may
construct the twist $W$ of the manifold, as the quotient of $P$ by the torus action
(see \cite{Sw}). Moreover, if the lifted torus action preserves the principal connection
$\theta$, then tensors on $M$ can be transferred to tensors on $W$ if their pullbacks to
$P$ coincide on $\mathcal H = {\mbox {Ker}} \, \theta$. A differential form $\alpha$ on
$M$ is ${\mathcal H}$-{\em related} to a differential form $\alpha_W$ on $W$,
$\alpha \sim_{\mathcal H} \alpha_W$, if their pull-backs to $P$ coincide on $\mathcal H$.

By applying the
twist construction of \cite[Prop. 4.5]{Sw} to $8$-dimensional
astheno-K\"ahler manifolds with torus action, one can get new simply-connected
 astheno-K\"ahler examples.\newline
Let $(N^6, J)$ be a $6$-dimensional simply-connected compact
complex manifold with a $J$-Hermitian structure $g$ which is strong
KT and standard. Consider the product $M^8 = N^6 \times \T^2$, where $\T^2$ is a $2$-torus with an invariant K\"ahler structure. Then $M^8$ is astheno-K\"ahler
and strong KT with torsion $c$ supported on $N^6$.\newline
Assume that there are two linearly independent integral closed $(1,1)$-forms $\Omega_i \in \Lambda^{1,1}_{\Z} (N^6)$,
$i = 1,2$, with $[\Omega_i] \in H^2 (N^6, \Z)$.
If
$$
\sum_{i,j=1}^2 \gamma_{ij} \Omega_i \wedge \Omega_j =0
$$
for some positive definite
matrix $(\gamma_{ij}) \in M_2 (\R)$, then by \cite[Prop. 4.5]{Sw} there is a compact simply
connected $\T^2$-bundle $\tilde W$ over $N^6$ whose total space is strong KT.
The manifold $\tilde W$ is the universal covering
of the twist $W$ of $N^6 \times \T^2$, where the K\"ahler flat
metric over $\T^2 = \C/\Z^2$ is given by the matrix $(\gamma_{ij})$
with respect to the standard generators with a compatible complex
structure and
topologically $W$ is a principal torus bundle over $N^6$ with Chern classes $[\Omega_i]$. Under the additional condition
\begin{equation}
\label{twistcond}
c \wedge \Omega_j =0\,,\qquad j = 1,2,
\end{equation}
we will prove that the total space is astheno-K\"ahler.\newline
 By \cite[Prop. 4.2]{Sw}, $W$ has torsion $3$-form $c_W$ such that
$$
\begin{array}{l}
c_W \sim_{\mathcal H} c - a^{-1} \Omega \wedge \xi^{b},\\[5pt]
d c_W \sim_{\mathcal H} dc + \sum_{i,j=1}^2 \gamma_{ij} \Omega_i \wedge \Omega_j,
\end{array}
$$
where $a^{-1} \Omega = (\Omega_1, \Omega_2)$ and $\xi$ is
the standard action of the torus on the $\T^2$-factor.
Denote by $F = F_{N^6} + F_{\T^2}$ and $F_W$ respectively the fundamental $2$-form associated to the
Hermitian structure $(J, g)$ on $M^8$ and the induced Hermitian structure $(J_W, g_W)$ on $W$.
Then, since $F_W \sim_{\mathcal H} F$ we have that also $F_W^2 \sim_{\mathcal H} F^2$. Therefore,
by \cite[Prop. 4.2]{Sw}
$$
c_W \wedge F_W \sim_{\mathcal H} (c - a^{-1} \Omega \wedge \xi^{b}) \wedge F.
$$
By using again \cite[Cor. 3.6]{Sw}, it follows that
$$
d (c_W  \wedge F_W) \sim_{\mathcal H} d ( (c - a^{-1} \Omega \wedge \xi^{b}) \wedge F) -
a^{-1} \Omega \wedge i_{\xi} ( (c - a^{-1} \Omega \wedge \xi^{b}) \wedge F)\,,
$$
where $i_{\xi}$ denotes the contraction by $\xi$. \newline
Now, $i_{\xi} c =0$ and $i_{\xiÊ} F = J \xi^b$ and thus
\begin{equation}\label{astheno-twist}
\begin{array}{lcl}
d(c_W \wedge F_W) &\sim_{\mathcal H} & (dc + \sum_{i,j=1}^2 \gamma_{ij} \Omega_i \wedge \Omega_j)
\wedge F - (c - a^{-1} \Omega \wedge \xi^b) \wedge d F\\[7pt]
&& + a^{-1} \Omega \wedge (c - a^{-1} \Omega \wedge \xi^b)\wedge i_{\xi} F\,.
\end{array}
\end{equation}
Observe that
\begin{equation}\label{Omega}
a^{-1} \Omega \wedge a^{-1} \Omega \wedge \xi^b \wedge i_{\xi} F =
\sum_{i,j=1}^2 \gamma_{ij} \Omega_i \wedge \Omega_j =0\,,
\end{equation}
and that $d F_{N^6} \wedge a^{-1} \Omega =0$, since $a^{-1} \Omega$
is of type $(1,1)$. Therefore, by the assumption \eqref{twistcond}, by \eqref{Omega} and by the
astheno-K\"ahlerianity of the metric $g$ on $M^8 = N^6 \times \T^2$, it follows that the right hand side of
\eqref{astheno-twist} vanishes. Hence $\tilde{W}$ is strong KT and astheno-K\"ahler.

\subsection{$8$-dimensional nilmanifolds}
We will construct a family of
astheno-K\"ahler (non strong KT) $2$-step nilmanifolds of real dimension $8$, showing that in higher
real dimension than $6$ there is in general no relation between
astheno-K\"ahler and strong KT structures.
Let $\{\eta^1,\ldots ,\eta^4\}$ be the set of complex forms of
type $(1,0)$, such that
\begin{equation}\label{cxstructureequations}
\left\{ \begin{array}{lcl}
d \eta^ j &=&0, \, j = 1,2,3, \\[5pt]
d \eta^ 4 &=& a_1 \, \eta^1 \wedge \eta^2 + \, a_2 \eta^1 \wedge
\eta^3 + \, a_3 \eta^1 \wedge \overline \eta^1 + a_4 \, \eta^1
\wedge \overline\eta^2
+ a_5 \, \eta^1 \wedge\overline \eta^3\\[5pt]
&& + a_6 \, \eta^2 \wedge \eta^3
+ a_7 \, \eta^2 \wedge \overline \eta^1
+ a_8 \, \eta^2 \wedge \overline \eta^2 + a_9 \, \eta^2 \wedge \overline \eta^3
+ a_{10} \, \eta^3 \wedge \overline  \eta^1\\[5pt]
&& + a_{11} \, \eta^3 \wedge \overline \eta^2 + a_{12} \, \eta^3 \wedge \overline \eta^3,
\end{array}
\right.
\end{equation}
where $a_j \in \C$, $j = 1, \ldots, 12$.

Then the complex forms $\{\eta^1,\ldots ,\eta^4\}$ span the dual of a $2$-step
nilpotent Lie algebra $\mathfrak{n}$, depending on the complex
parameters $a_1,\ldots ,a_{12}$ and define an integrable almost
complex structure $J$ on $\mathfrak{n}$.
Let $N$ be the simply connected Lie group with Lie
algebra $\mathfrak{n}$. Then, for any $a_1,\ldots ,a_{12} \in\Q[i]$, by the nilpotency of $N$, in view of Malcev's
theorem \cite{Mal}, there exists a
uniform discrete subgroup $\Gamma$ of $N$ such that $M=\Gamma \backslash N$ is
a compact nilmanifold.
\begin{theorem}\label{asthenofamily}
Let $a_1,\ldots ,a_{12}\in\Q[i]$ and $(M=\Gamma \backslash N, J)$ be the corresponding compact complex
nilmanifold of real dimension $8$.
Then the Hermitian metric $$g=\frac{1}{2}\sum_{j=1}^4\eta^j\otimes\overline{\eta}^j+
\overline{\eta}^j\otimes{\eta}^j$$ is astheno-K\"ahler if and only if
\begin{equation} \label{concoeffastheno}
\begin{array}{c}
\vert a_1 \vert^2+ \vert a_2 \vert^2 +\vert a_4 \vert^2 +\vert a_5 \vert^2 +\vert a_6
\vert^2+ \vert a_7\vert^2 + \vert a_9 \vert^2 +\\[5pt]
 \vert a_{10} \vert^2 + \vert a_{11} \vert^2
 = 2 \Re\mathfrak{e}\,( a_3 \overline{a}_ 8 + a_3 \overline{a}_{12} + a_8 \overline{a}_{12} )\,.
\end{array}
\end{equation}

If, in addition $a_8=0$ and $\vert a_4 \vert^2 + \vert a_{11} \vert^2 \neq 0$, then the astheno-K\"ahler
metric $g$ is not strong KT. Moreover, if $ a_8 = 0$,
the astheno-Kahler metric $g$ is strong KT if and only if $a_1 = a_4 = a_6 = a_7 = a_9 = a_{11} =0$.
\end{theorem}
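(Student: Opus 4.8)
The plan is to push all the computation onto the single non-closed generator $\eta^4$. Write $\eta^{i\bar j}:=\eta^i\we\overline\eta^j$ and $\eta^{ij\bar k\bar l}:=\eta^i\we\eta^j\we\overline\eta^k\we\overline\eta^l$, and split $d\eta^4$ into its $(2,0)$- and $(1,1)$-parts,
\[
\alpha:=\partial\eta^4=a_1\,\eta^1\we\eta^2+a_2\,\eta^1\we\eta^3+a_6\,\eta^2\we\eta^3,
\]
\[
\beta:=\overline\partial\eta^4=a_3\eta^{1\bar1}+a_4\eta^{1\bar2}+a_5\eta^{1\bar3}+a_7\eta^{2\bar1}+a_8\eta^{2\bar2}+a_9\eta^{2\bar3}+a_{10}\eta^{3\bar1}+a_{11}\eta^{3\bar2}+a_{12}\eta^{3\bar3}.
\]
Setting $\Phi:=\eta^{1\bar1}+\eta^{2\bar2}+\eta^{3\bar3}$ we have $F=\tfrac i2(\Phi+\eta^4\we\overline\eta^4)$. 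Because $\eta^1,\eta^2,\eta^3$ are closed, $\Phi$ is $d$-closed and $\partial\beta=\partial\overline\alpha=0$; computing $\overline\partial(\eta^4\we\overline\eta^4)=\beta\we\overline\eta^4-\eta^4\we\overline\alpha$ and then applying $\partial$ gives $\partial\overline\partial(\eta^4\we\overline\eta^4)=\beta\we\overline\beta-\alpha\we\overline\alpha$, hence
\[
\partial\overline\partial F=\tfrac i2(\beta\we\overline\beta-\alpha\we\overline\alpha),\qquad \partial\overline\partial F^2=-\tfrac12\,\Phi\we(\beta\we\overline\beta-\alpha\we\overline\alpha),
\]
the second identity because in $F^2$ the only terms not annihilated by $\partial\overline\partial$ are the $\eta^{j\bar j}\we\eta^4\we\overline\eta^4$ with $j\le3$. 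Note that both right-hand sides involve only $\eta^1,\eta^2,\eta^3$ and their conjugates.

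For the astheno-K\"ahler characterization I would observe that wedging a $(2,2)$-form $\gamma$ in $\eta^1,\eta^2,\eta^3$ with $\Phi$ keeps only the three diagonal components along $\eta^{12\bar1\bar2},\eta^{13\bar1\bar3},\eta^{23\bar2\bar3}$, all with the same sign; so $\partial\overline\partial F^2=0$ is equivalent to the vanishing of the sum $\gamma_{12}+\gamma_{13}+\gamma_{23}$ of these three coefficients of $\beta\we\overline\beta-\alpha\we\overline\alpha$. Each is a $2\times2$-minor computation: the $\eta^{ij\bar i\bar j}$-coefficient of $\beta\we\overline\beta$ is $2\Re(b_{i\bar i}\overline{b_{j\bar j}})-|b_{i\bar j}|^2-|b_{j\bar i}|^2$ (with $b_{k\bar l}$ the coefficients of $\beta$), and that of $\alpha\we\overline\alpha$ is the squared modulus of the corresponding coefficient of $\alpha$. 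Explicitly
\[
\gamma_{12}=2\Re(a_3\overline{a}_8)-|a_1|^2-|a_4|^2-|a_7|^2,
\]
\[
\gamma_{13}=2\Re(a_3\overline{a}_{12})-|a_2|^2-|a_5|^2-|a_{10}|^2,
\]
\[
\gamma_{23}=2\Re(a_8\overline{a}_{12})-|a_6|^2-|a_9|^2-|a_{11}|^2,
\]
and $\gamma_{12}+\gamma_{13}+\gamma_{23}=0$ is exactly \eqref{concoeffastheno}.

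For the strong KT statements I would use that $\partial\overline\partial F=0$ amounts to the vanishing of the whole $(2,2)$-form $\beta\we\overline\beta-\alpha\we\overline\alpha$, i.e. of all nine of its components, the diagonal ones being $\gamma_{12}=\gamma_{13}=\gamma_{23}=0$. Setting $a_8=0$, the equations $\gamma_{12}=0$ and $\gamma_{23}=0$ become $|a_1|^2+|a_4|^2+|a_7|^2=0$ and $|a_6|^2+|a_9|^2+|a_{11}|^2=0$, which force $a_1=a_4=a_7=a_6=a_9=a_{11}=0$. In particular an astheno-K\"ahler $g$ with $a_8=0$ and $|a_4|^2+|a_{11}|^2\neq0$ cannot be strong KT, and this also yields the ``only if'' half of the final equivalence.

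For the ``if'' direction, assume $a_8=0$ and $a_1=a_4=a_6=a_7=a_9=a_{11}=0$. Then $\alpha=a_2\,\eta^1\we\eta^3$ and $\beta=a_3\eta^{1\bar1}+a_5\eta^{1\bar3}+a_{10}\eta^{3\bar1}+a_{12}\eta^{3\bar3}$ both lie in the span of $\eta^1,\eta^3,\overline\eta^1,\overline\eta^3$, so $\beta\we\overline\beta-\alpha\we\overline\alpha$ has the single nonzero component $\gamma_{13}\,\eta^{13\bar1\bar3}$; under the present constraints \eqref{concoeffastheno} reduces precisely to $\gamma_{13}=0$, so all nine components vanish and $g$ is strong KT. The main difficulty throughout is purely organizational: carrying the $2\times2$-minor coefficients and the reordering signs of the basis $6$-form correctly, so that the diagonal parts assemble into \eqref{concoeffastheno} while the off-diagonal parts are seen to vanish identically once the listed parameters are zero.
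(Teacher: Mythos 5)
Your proof is correct and follows essentially the same route as the paper, which simply records the outcome of the direct computation of $\partial\overline\partial F^2$ (and, for the strong KT claims, of $\partial\overline\partial F$) as a multiple of $\eta^{123\overline1\overline2\overline3}$; your decomposition $d\eta^4=\alpha+\beta$ and the reduction to the three diagonal minors of $\beta\wedge\overline\beta-\alpha\wedge\overline\alpha$ is just cleaner bookkeeping for that computation, and your coefficients $\gamma_{12}+\gamma_{13}+\gamma_{23}$ reproduce the paper's displayed formula exactly. The strong KT analysis via the nine components of $\beta\wedge\overline\beta-\alpha\wedge\overline\alpha$, of which only the diagonal ones are needed for the ``only if'' direction and all of which collapse to $\gamma_{13}$ for the ``if'' direction, likewise fills in what the paper leaves as ``a direct computation.''
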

\begin{proof} The fundamental $2$-form of $(J, g)$ is given by
$$
F = \frac{i}{2}\sum_{j=1}^4
\eta^j\wedge\overline{\eta}^j\,.
$$
A straightforward computation yields
$$
\begin{array}{lcl}
\partial\overline{\partial}\,F^2 &=&-\frac{1}{2}\,\partial\overline{\partial}
\left( \sum_{i<j}\eta^{i\overline{i}j\overline{j}} \right)\\[5pt]
&&=\frac{1}{2} (\vert a_1\vert^2 + \vert a_2\vert^2 \!+\! \vert a_4
\vert^2\! + \!\vert a_5\vert^2\! +\! \vert a_6 \vert^2 +\! \vert a_7 \vert^2 +\! \vert a_9 \vert^2 +\! \vert a_{10}
\vert^2 +\! \vert a_{11} \vert^2 \\[5pt]
&&- 2\Re\mathfrak{e}\,(a_3\overline{a}_8 + a_3\overline{a}_{12} + a_8\overline{a}_{12}))\,
\eta^{123\overline{1}\overline{2}\overline{3}}.
\end{array}
$$
where, for instance, $\eta^{i\overline{i} j\overline{j}}$ denotes the wedge product
$\eta^i \wedge \eta^{\overline{i}}\wedge \eta^j \wedge\eta^{\overline{j}}$.\newline
Hence
$
\partial\overline{\partial}\,F^2 =0
$
if and only if \eqref{concoeffastheno} holds.

The last part of the Theorem can be easily showed by a direct computation.
\end{proof}
As an application of the last result, we explicitly construct an astheno-K\"ahler metric which is not strong KT.
Take
$$
a_1=a_2= a_5 = a_6 = a_7 = a_8 = a_9 = a_{10} =0, \, a_3=a_4=a_{11}=a_{12}=4
$$
and set
$
\eta^j=e^{2j-1}+ ie^{2j}\,, j=1,\ldots ,4\,.
$
Then
 $\mathfrak{n}$
has structure equations
\begin{equation}\label{astheno}
\left\{
\begin{array} {l}
d e^i = 0\,,\quad i=1,\ldots ,6\,,\\[5pt]
d e^7 = 4(e^{13}+e^{24}-e^{35}-e^{46})\,,\\[5pt]
d e^8 = 4(e^{23}-e^{14}+e^{45}-e^{36}-2e^{12}-2e^{56})\,.
\end{array}
\right.
\end{equation}
Let $M=\Gamma \backslash G$ be the associated compact nilmanifold.
Then, the Hermitian metric
$
g=\sum_{i=1}^8e^j\otimes e^j
$
is an astheno-K\"ahler metric on $M$, that is not strong KT, according to Theorem \ref{asthenofamily}.

If we take
$$
a_1 = a_2=a_4= a_6 = a_7 = a_8 = a_9 = a_{11} =0, \, a_3=a_5=a_{10}=a_{12}=2
$$
and set
$
\eta^j=e^{2j-1}+ ie^{2j}\,, j=1,\ldots ,4\,,
$
then we get a Hermitian metric satisfying conditions \eqref{specialastheno}.
\begin{rem} {\rm In real dimension six, by \cite{FPS} the existence of a strong KT structure on a nilpotent Lie
algebra depends only on the complex structure on the nilpotent Lie algebra. The same property is not anymore
true for a astheno-K\"ahler structure on a nilpotent Lie algebra of real dimension eight. Indeed, for the
nilpotent Lie algebra defined by \eqref{cxstructureequations} with the coefficients $a_j, j = 1, \ldots, 12,$
satisfying the condition \eqref{concoeffastheno}, the $J$-Hermitian metric given by
$$
\frac{1}{2} [2(\eta^1\otimes\overline{\eta}^1+
\overline{\eta}^1\otimes{\eta}^1)+ 3 (\eta^2\otimes\overline{\eta}^2+
\overline{\eta}^2\otimes{\eta}^2) + 4 (\eta^3\otimes\overline{\eta}^3+
\overline{\eta}^3\otimes{\eta}^3) + 5 ( \eta^4\otimes\overline{\eta}^4+
\overline{\eta}^1\otimes{\eta}^4)]
$$
is not any more astheno-K\"ahler.}
\end{rem}

\section{Deformations of strong KT complex structures on $6$-dimensional nilmanifolds} \label{deformations}

 We will say that a complex structure $J$ on a nilmanifold $\Gamma \backslash G$ is {\em invariant} if it arises from a corresponding left-invariant complex structure on the Lie group $G$. We recall that a complex structure $J$ on a Lie algebra ${\mathfrak g}$ is called {\em abelian}, if and only if $[JX, JY] = [X, Y]$, for any $X, Y \in {\mathfrak g}$ (see \cite{BDM}) and it is {\em bi-invariant} if $J$ commutes with the adjoint representation.

By using the result of \cite{FPS} together with the
results about \lq \lq symmetrization\rq \rq of non-invariant structures obtained in \cite{FG, Ug},
one has the following
\begin{theorem}\label{SKT} Let $M^6=\Ga\bs G$ be a 6-dimensional nilmanifold with
an invariant complex structure $J$. Then there exists a $J$-Hermitian strong KT metric $g$
if and only if $J$ has a basis $(\omega^ 1, \omega^2, \omega^3)$ of $(1,0)$-forms such that
\be{SKT1}
\left\{\ba{l}d\omega^1=0\\d\omega^2=0\\d\omega^3=A\omega^{\overline1 2} +
B \omega^{ \overline 2 2}+C \omega^{1 \overline 1} + D \omega^{1 \overline 2}  +E \omega^{12}
\ea\right.\ee where $A,B,C,E, F$ are complex numbers such that \be{SKT2}
|A|^2+|D|^2+|E|^2+2{\mathfrak {Re}}(\ol BC)=0
\ee
and $\omega^{i \overline j}$ stands for $ \omega^i \wedge \overline \omega^j$.
Moreover, the Lie algebra $\mathfrak g$ of $G$ is isomorphic to one of the following:
$$
\begin{array}{l}
{\mathfrak h}_2 = (0,0,0,0,e^{12}, e^{34})\,,\\[3pt]
{\mathfrak h}_4 = (0,0,0,0,0, e^{12}, e^{14} + e^{23}),\, \\[3pt]
{\mathfrak h}_5 = (0,0, 0,0, e^{13}+e^{42}, e^{14}+e^{23})\,, \\[3pt]
{\mathfrak h}_8 = (0,0,0,0,0,0, e^{12})\,.
\end{array}
$$
\end{theorem}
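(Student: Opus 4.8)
The plan is to descend from the nilmanifold to its Lie algebra $\g$ by symmetrization, to put the invariant complex structure into the stated normal form, and then to read off both the strong KT constraint \eqref{SKT2} and the isomorphism type from the classification of \cite{FPS}. For the first step I would invoke the symmetrization results of \cite{FG,Ug}: given an arbitrary $J$-Hermitian strong KT metric $g$ on $M^6=\Ga\bs G$, one averages its fundamental form $F$ over the compact quotient with respect to the bi-invariant volume. The resulting $2$-form is left-invariant, it is again the fundamental form of a $J$-Hermitian metric (positivity is preserved because $G$ acts by $J$-biholomorphisms), and since averaging commutes with $d$ and with the $(p,q)$-decomposition induced by the invariant $J$ it commutes with $\pd\opd$; hence the averaged metric is again strong KT. Thus the existence of a strong KT metric on $(M,J)$ is equivalent to the existence of an invariant one, and the problem descends entirely to $\g$.

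Next I would normalize the complex structure. By Salamon's structure theorem \cite{Sal} for nilpotent Lie algebras carrying a complex structure there is a basis $(\om^1,\om^2,\om^3)$ of $(1,0)$-forms with $d\om^1=0$, with $d\om^2\in\Lambda^2\langle\om^1,\ol{\om}^1\rangle$, and with $d\om^3$ in the subalgebra generated by $\om^1,\om^2$ and their conjugates. Writing the general invariant Hermitian metric in this coframe and expanding $\pd\opd F$, the strong KT condition becomes a system of algebraic relations among the structure constants. The content of \cite{FPS} is that these relations permit a further type-preserving linear change of coframe that annihilates $d\om^2$ and brings the equations to the displayed form \eqref{SKT1}, the single surviving scalar relation being exactly \eqref{SKT2}. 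For the converse, given any $J$ of the form \eqref{SKT1} subject to \eqref{SKT2}, I would check directly that the diagonal metric with fundamental form $F=\fs{i}{2}\sum_{j}\om^j\we\ol{\om}^j$ satisfies $\pd\opd F=0$: this is short, since only $d\om^3$ contributes and the coefficient of the top form $\om^{123\1\2\3}$ in $\pd\opd F$ is proportional to the left-hand side of \eqref{SKT2}.

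Finally I would identify $\g$ up to isomorphism. From \eqref{SKT1} the algebra is $2$-step nilpotent with all differentials concentrated in $d\om^3$; passing to a real coframe and comparing the rank and type of the real and imaginary parts of $d\om^3$ against Salamon's list of six-dimensional nilpotent Lie algebras singles out exactly $\mathfrak h_2,\mathfrak h_4,\mathfrak h_5,\mathfrak h_8$, as recorded in \cite{FPS}.

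The main obstacle is the middle step: the full evaluation of $\pd\opd F$ for a general invariant metric, together with the verification that the strong KT constraints always allow one to gauge away $d\om^2$ and to reduce to the three-parameter family \eqref{SKT1}. That computation and the attendant case analysis are precisely what is carried out in \cite{FPS}, so here they are invoked rather than reproduced.
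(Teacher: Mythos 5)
Your proposal is correct and follows essentially the same route as the paper, which offers no written proof of this theorem beyond attributing it to the classification in \cite{FPS} combined with the symmetrization results of \cite{FG,Ug}; your averaging argument and the delegation of the normal form, the constraint \eqref{SKT2}, and the list $\mathfrak h_2,\mathfrak h_4,\mathfrak h_5,\mathfrak h_8$ to \cite{FPS} are exactly the intended ingredients. The only cosmetic slip is in your paraphrase of Salamon's theorem (it gives $d\omega^2$ in the ideal generated by $\omega^1$, not a priori a multiple of $\omega^{1\overline 1}$), but this does not affect the argument since the reduction to \eqref{SKT1} is carried out in \cite{FPS} anyway.
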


\begin{rem} {\rm By the previous theorem one has that $J_0$ is  abelian, i.e. the differential
of the $(1,0)$-forms
are only of type $(1,1)$, if and only if $E =0$.}
\end{rem}
In the sequel, we will denote by $J_0$ the strong KT complex structure, i.e.
$J_0$ is the complex structure which gives rise to a strong KT structure, associated to the basis
$(\omega^1, \omega^2, \omega^3)$ satisfying the condition \eqref{SKT2}.

We will use as in \cite{FPS} the notation
$$
{\bf Y}_{\omega} = A\omega^{\overline1 2} +B \omega^{ \overline 2 2}+C \omega^{1 \overline 1} +
D \omega^{1 \overline 2}\,,
$$
where
$$
{\bf Y} = \left ( \begin{array}{cc} A&B\\ C& D
\end{array} \right)
$$
so that
$$
d \omega^3 = {\bf Y}_{\omega} + E \omega^{12}\,.
$$
Moreover, we will denote as in \cite{FPS} by
$$
{\adj }({\bf Y} )= \left ( \begin{array}{cc} D&-B\\ -C&A
\end{array} \right).
$$
The $1$-forms $\omega^j\,, j = 1,2,3$,
associated to the strong KT complex structure $J_0$, are left-invariant on
$G$ and they define a basis $(e^1, \ldots, e^6)$
 of real $1$-forms by setting
 $$
\omega^1 = e^1 + i e^2 \,,\quad \, \omega^2 = e^3 + i e^4\,,\quad \omega^3 = e^5 + i e^6\,.
$$
These $1$-forms are pull-backs of corresponding $1$-forms on $M^6$, which we denote
by the same symbols.

Since for the dual basis $(e_1, \ldots, e_6)$ we have
$$
[e_j, e_k] \subseteq {\mbox {span}} <e_5, e_6>\,,
$$
for any $j, k = 1, \ldots, 4$, the quotient $M^6$ is the total space of a
principal ${\mathbb T}^2$ -bundle over ${\mathbb T}^4$.
The space of invariant $1$-forms annihilating the fibres of $\pi: M \to {\mathbb T}^4$ is
$$
{\mathbb V} = {\mbox {span}} <e^1, e^2, e^3, e^4> \subseteq \ker (d: \mathfrak g^* \to \Lambda^2 \mathfrak g^*),
$$
with equality for the Lie algebras ${\mathfrak h}_j, j = 2,4,5$.

As in \cite{KS} we can prove the following
\begin{lemma} \label{lemmainv} Let $J$ be any invariant complex structure on a strong
KT nilmanifold $M^6 = \Gamma \backslash G$ . Then the projection $\pi$ induces a complex structure $\tilde J$
on ${\mathbb T}^4$ such that $\pi: (M^6, J ) \to ({\mathbb T}^4, \tilde J)$
is holomorphic.
\end{lemma}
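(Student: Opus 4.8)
The plan is to reduce the statement to a single linear‑algebra fact about the new complex structure, and then to play the shape of the differential recorded in Theorem \ref{SKT} against integrability. First I would note that if $\pi\colon(M^6,J)\to(\T^4,\ti J)$ is to be holomorphic, then $\ti J$ has no choice: it must be the structure obtained by pushing $J$ down through $d\pi$. This descent is well defined exactly when the vertical distribution $\mathcal V=\ker d\pi={\rm span}\langle e_5,e_6\rangle$ is $J$‑invariant, equivalently when the space ${\mathbb V}={\rm span}\langle e^1,e^2,e^3,e^4\rangle$ of basic $1$‑forms is invariant under the dual action $J^*$. Conversely, if ${\mathbb V}$ is $J^*$‑invariant then $J$ induces a constant almost complex structure $\ti J$ on the abelian quotient, and every invariant almost complex structure on a torus is integrable, so $(\T^4,\ti J)$ is a complex torus and $\pi$ is holomorphic by construction. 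Hence everything reduces to proving $\dim_{\C}({\mathbb V}_{\C}\cap\La^{1,0}_J)=2$ (this dimension is automatically $\ge1$, and by conjugation equals $\dim_{\C}({\mathbb V}_{\C}\cap\La^{0,1}_J)$).

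For the Lie algebras ${\mathfrak h}_2,{\mathfrak h}_4,{\mathfrak h}_5$ one has the crucial equality ${\mathbb V}=\ker(d\colon{\mathfrak g}^*\to\La^2{\mathfrak g}^*)$ recorded above, and the image $S:=\Im(d|_{{\mathfrak g}^*})=\langle de^5,de^6\rangle$ is a fixed $2$‑dimensional subspace of $\La^2{\mathbb V}$. Since $J$ is invariant and integrable, I would invoke Salamon's criterion \cite{Sal} to choose a $(1,0)$‑coframe $\mu^1,\mu^2,\mu^3$ for $J$ with $d\mu^1=0$. Then $\mu^1\in\ker d\otimes\C={\mathbb V}_{\C}$, so $\dim_{\C}({\mathbb V}_{\C}\cap\La^{1,0}_J)\ge1$, and it remains only to exclude the value $1$.

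Assume for contradiction that ${\mathbb V}_{\C}\cap\La^{1,0}_J=\langle\mu^1\rangle$, so that $\mu^2,\mu^3$ have non‑trivial $\langle e^5,e^6\rangle$‑components. The forms $d\mu^2,d\mu^3$ lie in $S$; if they were linearly dependent, a combination $a\mu^2+b\mu^3$ would be closed, hence would lie in ${\mathbb V}_{\C}\cap\La^{1,0}_J=\langle\mu^1\rangle$, forcing $a=b=0$. Thus $d\mu^2,d\mu^3$ are independent and span all of $S$, and integrability (no $(0,2)_J$‑part on $d$ of a $(1,0)$‑form) shows that \emph{every} element of $S$ is of type $(2,0)+(1,1)$ with respect to $J$. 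On the other hand, under the standing assumption the projection $p\colon{\mathbb V}_{\C}\to\La^{0,1}_J$ has kernel exactly $\langle\mu^1\rangle$ and is therefore onto; the $(0,2)_J$‑part of a decomposable $\alpha\we\beta$ with $\alpha,\beta\in{\mathbb V}_{\C}$ equals $p(\alpha)\we p(\beta)$. For ${\mathfrak h}_5$ one has $S=\langle\om^{1}\we\om^{2},\ \ol\om^{1}\we\ol\om^{2}\rangle$ with $\om^{1},\om^{2}$ basic, and the two $(0,2)_J$‑parts vanish only if $\mu^1\in\langle\om^{1},\om^{2}\rangle$, respectively $\mu^1\in\langle\ol\om^{1},\ol\om^{2}\rangle$ — which cannot both hold. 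Hence $S$ carries a non‑zero $(0,2)_J$‑component, a contradiction. Therefore $\dim_{\C}({\mathbb V}_{\C}\cap\La^{1,0}_J)=2$, ${\mathbb V}$ is $J^*$‑invariant, and $\ti J$ exists.

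The decisive and most delicate point, and the one I expect to be the real obstacle, is the last step: showing for \emph{each} relevant algebra that the fixed plane $S=\Im(d)$ always contains a form with non‑zero $(0,2)_J$‑part whenever ${\mathbb V}_{\C}\cap\La^{1,0}_J$ is only $1$‑dimensional. For ${\mathfrak h}_5$ (the Iwasawa case of \cite{KS}) this is transparent, but for ${\mathfrak h}_2$ and ${\mathfrak h}_4$ the generators $de^5,de^6$ have a different shape and the $(0,2)_J$‑parts must be computed by hand to rule out simultaneous degeneration. The genuinely separate case is ${\mathfrak h}_8$, where ${\mathbb V}\subsetneq\ker d$ (the kernel is $5$‑dimensional); there the implication ``$\mu^1\in\ker d\Rightarrow\mu^1\in{\mathbb V}_{\C}$'' is lost, so the argument above does not apply verbatim and one must treat the single relation $de^6=e^{12}$ directly, identifying by hand the invariant distribution through which $J$ descends.
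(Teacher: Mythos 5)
Your reduction of the lemma to the $J$-invariance of $\mathbb V$ is exactly the paper's first step, but the way you then establish that invariance leaves genuine gaps that you yourself flag without closing. First, for $\mathfrak h_2$ and $\mathfrak h_4$ your contradiction argument hinges on showing that, whenever $\mathbb V_{\C}\cap\Lambda^{1,0}_J$ is only one-dimensional, the plane $S=\mathrm{Im}(d)$ contains a form with non-zero $(0,2)_J$-part; you verify this only for $\mathfrak h_5$. For $\mathfrak h_4$ the generator $e^{14}+e^{23}$ is a combination of $\omega^{12}$ and $\overline\omega^{12}$, and its $(0,2)_J$-part is a difference $p(\omega^1)\wedge p(\omega^2)-p(\overline\omega^1)\wedge p(\overline\omega^2)$ that can in principle vanish without either summand vanishing, so the ``rule out simultaneous degeneration'' step is a real computation, not a formality. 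Second, and more seriously, the $\mathfrak h_8$ case is not proved at all: you correctly observe that $\mathbb V\subsetneq\ker d$ there, so the Salamon coframe argument gives nothing, and you stop at ``one must treat $de^6=e^{12}$ directly.'' As it stands the proposal establishes the lemma only for $\mathfrak h_5$ (and, modulo the unperformed check, $\mathfrak h_2$).

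The paper avoids both difficulties by citing stronger structural inputs. For any non-bi-invariant complex structure on $\mathfrak h_2$, $\mathfrak h_4$, $\mathfrak h_5$, Lemma 11 of \cite{Ug} provides a $(1,0)$-coframe with \emph{two} closed forms $\eta^1,\eta^2$, not just the single closed form guaranteed by Salamon's criterion; since $\ker(d:\mathfrak g^*\to\Lambda^2\mathfrak g^*)$ is exactly four-dimensional for these algebras and equals $\mathbb V$, the real span of $\eta^1,\eta^2,\overline\eta^1,\overline\eta^2$ is forced to coincide with $\mathbb V$, which is therefore $J$-invariant with no case analysis. For $\mathfrak h_8$ one uses instead that every complex structure on it is abelian, hence preserves the subspace $\langle e_5,e_6\rangle$ and so its annihilator $\mathbb V$; the bi-invariant case on $\mathfrak h_5$ is handled by \cite{KS}. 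If you want to keep your more elementary route, you must actually carry out the $(0,2)_J$-computation for $\mathfrak h_2$ and $\mathfrak h_4$ and supply a separate argument for $\mathfrak h_8$; otherwise the clean fix is to replace Salamon's single closed form by Ugarte's two.
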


\begin{proof} In order to prove the result is sufficient to show that $\mathbb V$ is $J$-invariant.
By Theorem \ref{SKT} the Lie algebra $\mathfrak g$ is isomorphic to $\mathfrak h_2, \mathfrak h_4, \mathfrak h_5$
or $\mathfrak h_8$.\newline
Any complex structure on $\mathfrak h_8$ is abelian, so the center  given by ${\mbox {span}} <e_5, e_6>$ is
preserved by the complex structure and therefore $\mathbb V$ is $J$-invariant.
By \cite[Lemma 11]{Ug} for any invariant (non bi-invariant) complex structure on
the Lie algebras $\mathfrak h_2$, $\mathfrak h_4$ and $\mathfrak h_5$ there is a
basis $(\eta^1, \eta^2, \eta^3)$ of $(1,0)$-forms such that
$$
\begin{array}{l}
d \eta^j =0\,, \,\,\,\, j = 1,2\,,\\[3pt]
d \eta^3 = \rho \eta^{12} + \eta^{1 \overline 1} + G \eta^{1 \overline 2} + H \eta^{2 \overline 2}\,,
\end{array}
$$
with $\rho = 0,1$ and $G, H \in \C$. Then, since the real space associated to the complex space
spanned by $\eta^1, \eta^2$ coincides with the kernel of
$d: \mathfrak g^* \to \Lambda^2 \mathfrak g^*$, we have that also in this case $\mathbb V$ is $J$-invariant.

If $J$ is bi-invariant, then the Lie algebra $\mathfrak g$ has to be isomorphic to $\mathfrak h_5$ and the result
follows by \cite{KS}.
\end{proof}

Let $(M^6 =\Gamma \backslash G, J_0)$ be a $6$-dimensional nilmanifold with $J_0$ an
invariant strong KT complex structure. By Theorem \ref{SKT} we know that
$\mathfrak g$ is $2$-step nilpotent with $\dim {\mathfrak g}^1 \geq 2$ and with center of dimension $1$ or
$2$. Therefore,
we may apply Theorem 4.1  and 4.3 by Rollenske in \cite {Rollenske2} and conclude that
any small and large deformation of $J_0$ is still invariant. Consequently, we may consider
invariant deformations and work on the space of complex structures of ${\mathfrak g}$
$$
{\mathcal C} ({\mathfrak g}) = \{ J \in {\mbox {End}} ({\mathfrak g}) \, \vert J^2 = - 1, \, N_J =0 \},
$$
where by $N_J$ we denote the Nijenhuis tensor.

We denote by
$
{\mathcal C}^+({\mathfrak g})$ the space of complex structures of $\mathfrak g$ inducing the same
orientation on $\mathbb V$ as $J_0$ and by ${\mathcal C}_0^{\cdot} ({\mathfrak g})$ the open subset
of ${\mathcal C}^+ ({\mathfrak g})$ whose elements are such that there exists a basis
$( \eta^1, \eta^2, \eta^3)$ for which $\eta^{123} \wedge \omega^{\overline{123}} \neq 0$.

We can prove the following
\begin{theorem} \label{exforms} Let $(M^6, J_0)$ be a $6$-dimensional strong KT nilmanifold with $J_0$
defined by the $(1,0)$-forms $(\omega_1, \omega_2, \omega_3)$. If $J \in {\mathcal C} _0^{\cdot} ({\mathfrak g})$,
then there exists a basis of $(1,0)$-forms $(\eta^1, \eta^2, \eta^3)$ such that
\be{aaa}
\left\{\ba{l}\eta^1=\w1+a\w\1+b\w\2\\
\eta^2=\w2+c\w\1+f\w\2\\\eta^3=\w3+x\w\1+y\w\2+u\w\3,\ea \right.\ee
with $a, b, c, f, x, y, u \in \C$ satisfying
\begin{equation} \label{integrability}
-(\det {\bf X}) E + \left (\tr ({\bf X} \overline {\bf Y}) - \overline E \right) u + \tr ({\bf X}\,{\adj}({\bf Y}) ) =0,
\end{equation}
where  
\begin{equation}\label{defmatrixX}
{\bf X}= \left ( \begin{array}{cc} a&b\\ c& f
\end{array} \right).\end{equation}

Therefore, the generic complex structure on $M^6$ has a space of $(1,0)$-forms generated by the previous
forms $\eta^1$, $\eta^2$ and $\eta^3$.
\end{theorem}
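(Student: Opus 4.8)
The plan is to construct the basis in two stages---first fixing $\eta^1,\eta^2$ inside the $J$-invariant plane $\mathbb V$, then completing to $\eta^3$---and afterwards to read off \eqref{integrability} as the condition $N_J=0$ expressed in this basis. Write $V^{1,0}_J$ for the space of $(1,0)$-forms of $J$.

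\emph{Normal form.} By Lemma \ref{lemmainv}, $\mathbb V$ is $J$-invariant, so its complexification $\mathbb V\otimes\C=\langle\w1,\w2,\w\1,\w\2\rangle$ splits as $W\oplus\ol W$ with $W:=V^{1,0}_J\cap(\mathbb V\otimes\C)$ of complex dimension $2$. The hypothesis $J\in\mathcal C_0^{\cdot}(\mathfrak g)$, i.e. $\eta^{123}\we\w{\ol{123}}\neq0$, is equivalent to $V^{1,0}_J\cap\langle\w\1,\w\2,\w\3\rangle=0$; in particular $W\cap\langle\w\1,\w\2\rangle=0$, so the projection $W\to\langle\w1,\w2\rangle$ along $\langle\w\1,\w\2\rangle$ is an isomorphism. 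Choosing in $W$ the two vectors sent to $\w1$ and $\w2$ gives $\eta^1=\w1+a\w\1+b\w\2$ and $\eta^2=\w2+c\w\1+f\w\2$. Since these involve neither $\w3$ nor $\w\3$, any completion of $\eta^1,\eta^2$ to a basis of $V^{1,0}_J$ must carry a nonzero $\w3$-coefficient (else $\eta^{123}\we\w{\ol{123}}=0$); normalizing that coefficient to $1$ and subtracting suitable multiples of $\eta^1,\eta^2$ (which only modifies the $\w\1,\w\2$-coefficients) produces $\eta^3=\w3+x\w\1+y\w\2+u\w\3$. This is \eqref{aaa}.

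\emph{Reduction of the integrability condition.} As $\w1,\w2$ are closed, so are $\w\1,\w\2$, whence $d\eta^1=d\eta^2=0$; thus $N_J=0$ is equivalent to $d\eta^3$ having vanishing $(0,2)_J$-part. From \eqref{SKT1},
\[
d\eta^3=d\w3+u\,d\w\3=\Y_\omega+E\,\w{12}+u\bigl(\ol{\Y_\omega}+\ol E\,\w{\ol{12}}\bigr),
\]
which lies in $\Lambda^2(\mathbb V\otimes\C)$. Because $\w\3$ does not appear, the only possible $(0,2)_J$-contribution is along $\eta^{\1\2}$, and I would extract its coefficient by wedging with $\eta^1\we\eta^2$: the $(2,0)_J$- and $(1,1)_J$-parts of $d\eta^3$ wedge to zero against $\eta^1\we\eta^2$, while $\eta^{\1\2}\we\eta^1\we\eta^2$ is a nonzero multiple of the volume $\w{12\1\2}$. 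Hence $N_J=0$ becomes the vanishing of $d\eta^3\we\eta^1\we\eta^2$.

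\emph{Identifying the coefficient.} Finally I would expand
\[
\eta^1\we\eta^2=\w{12}+c\,\w{1\1}+f\,\w{1\2}+a\,\w{\1 2}+b\,\w{\2 2}+(\det\X)\,\w{\1\2}
\]
and compute $d\eta^3\we\eta^1\we\eta^2$ as a multiple of $\w{12\1\2}$, pairing each monomial of $\Y_\omega$, $E\,\w{12}$, $u\,\ol{\Y_\omega}$ and $u\,\ol E\,\w{\ol{12}}$ with its complementary monomial in $\eta^1\we\eta^2$. The four groups then collect, up to the overall sign fixed by orienting $\w{12\1\2}$, into $\tr(\X\,\adj(\Y))$, $(\det\X)E$, $\tr(\X\ol\Y)\,u$ and $\ol E\,u$, so that setting the total coefficient to zero is exactly \eqref{integrability}. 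The genuine work is confined to this last step: the sign-bookkeeping and the recognition that the products reassemble into the determinant $\det\X$ and the two traces $\tr(\X\,\adj(\Y))$ and $\tr(\X\ol\Y)$; everything preceding is dictated by Lemma \ref{lemmainv} and by the open condition defining $\mathcal C_0^{\cdot}(\mathfrak g)$. The concluding genericity remark then follows since $\mathcal C_0^{\cdot}(\mathfrak g)$ is open in $\mathcal C^+(\mathfrak g)$.
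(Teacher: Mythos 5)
Your proposal is correct and follows essentially the same route as the paper: use Lemma \ref{lemmainv} and the open condition $\eta^{123}\wedge\omega^{\overline{123}}\neq 0$ to put $(\eta^1,\eta^2,\eta^3)$ in the normal form \eqref{aaa}, then observe that since $d\eta^1=d\eta^2=0$ integrability reduces to $d\eta^3\wedge\eta^{12}=0$, whose coefficient in $\omega^{12\overline1\overline2}$ is exactly the left-hand side of \eqref{integrability}. You merely spell out in more detail the linear algebra behind the normal form and the reason the single wedge condition captures the whole $(0,2)$-part, both of which the paper leaves implicit.
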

\begin{proof} In view of Lemma \ref{lemmainv}, we have that $\eta^1$ and $\eta^2$ can be chosen
so that their real and imaginary
components span $\mathbb V$. The condition
$$
\eta^{123} \wedge \omega^{\overline{123}} \neq 0
$$
implies that $\omega^1, \omega^2, \omega^3$ appear with
non-zero coefficients.

The equation \eqref{integrability}
follows from the integrability condition
$d\eta^3\we \eta^{12}=0$
expressing the fact that $d \eta^3$ has no term involving $\eta^{\overline1 \overline 2}$.

By a direct computation one has that
$$
d \eta^3 \wedge \eta^{12} = -(c E b - c B + c u\overline B - f E a + f A +
f u \overline{D} - u \overline{E} - b C + b u \overline{C} + aD
 + a u \overline{A}) \omega^{12 \overline{1} \overline{2}}.
$$
Therefore the complex structure is integrable if and only if the equation \eqref{integrability} is satisfied.
\end{proof}

Consider the space $\La=\langle \eta^1,\eta^2,\eta^3 \rangle$ generated by the modified complex
1-forms \eqref{aaa}. If $\La$ is maximally complex, then it defines an invariant
almost complex structure on $M^6$ that we will denote by $J_{\X,x,y}$, where
$
{\bf X}$ is given by \eqref{defmatrixX}.

By the previous theorem, the almost complex structure $J_{\X,x,y}$ is integrable if and only if the equation
 \eqref{integrability} holds.
\begin{rem}
{\rm In the case of Iwasawa manifold (with the bi-invariant complex structure $J_0$ which is
therefore non strong KT) one has $E = 1$ and ${\bf Y} =0$ and therefore
the integrability condition reduces to the equation $$ u=bc-af=-\det\X $$ already considered in \cite{FPS}.}
\end{rem}
We are ready to prove the following
\begin{theorem} \label{hypersurface} Let $(M^6 = \Gamma \backslash G, J_0)$ be a $6$-dimensional nilmanifold with an invariant strong KT complex structure $J_0$. Then the space of deformations of $J_0$ for which there exists a
strong KT metric is parametrized generically by a real algebraic
hypersurface of degree $4$ in $\C^4$ through the origin
$O=(0,0,0,0)$. Furthermore, $O$ is non singular (respectively
singular) according to the fact that $J_0$ is non abelian
(respectively abelian).
\end{theorem}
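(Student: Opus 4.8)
The plan is to convert the existence of a strong KT metric among the deformations of $J_0$ into a single real-polynomial equation in the entries of the matrix $\X$ of \eqref{defmatrixX}, and then analyse that equation at the origin. First I would reduce everything to the four parameters in $\X$. By Theorems 4.1 and 4.3 of \cite{Rollenske2} every small (and large) deformation of $J_0$ is invariant, so by Theorem \ref{exforms} each nearby complex structure is one of the $J_{\X,x,y}$ with $(1,0)$-forms \eqref{aaa}, the entry $u$ being fixed by the integrability relation \eqref{integrability}, which is affine in $u$:
\[
\alpha(\X)\,u+\beta(\X)=0,\qquad \alpha(\X)=\tr(\X\overline\Y)-\overline E,\quad \beta(\X)=-(\det\X)E+\tr(\X\,\adj(\Y)).
\]
Since $d\eta^1=d\eta^2=0$, $\eta^3=\omega^3+x\,\omega^{\overline1}+y\,\omega^{\overline2}+u\,\omega^{\overline3}$ and $d\omega^1=d\omega^2=0$, the two-form $d\eta^3$ depends only on $\X$ and $u$, and not on $x,y$. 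Hence the existence of a $J_{\X,x,y}$-Hermitian strong KT metric is a condition on $\X$ alone, and $\C^4=\{\X\}$ is the natural parameter space, the free parameters $x,y$ accounting for the remaining directions of the deformation space.

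Next I would compute $d\eta^3$ in the basis $(\eta^1,\eta^2,\eta^3)$. Writing $(\eta^1,\eta^2)^{t}=W+\X\overline W$ with $W=(\omega^1,\omega^2)^{t}$ and inverting (the matrix $I-\X\overline\X$ being invertible near $\X=0$), one substitutes $\omega^1,\omega^2,\omega^{\overline1},\omega^{\overline2}$ into
\[
d\eta^3=\Y_\omega+E\,\omega^{12}+u\,\overline{\Y_\omega}+u\overline E\,\omega^{\overline1\overline2},
\]
and reads off the new coefficients $\tilde A,\tilde B,\tilde C,\tilde D$ (the $(1,1)$-part $\tilde\Y_\eta$) and $\tilde E$ (the coefficient of $\eta^{12}$); the vanishing of the $\eta^{\overline1\overline2}$-term is exactly \eqref{integrability}. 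By Theorem \ref{SKT}, $J_{\X,x,y}$ carries a strong KT metric precisely when $|\tilde A|^2+|\tilde D|^2+|\tilde E|^2+2\,\mathfrak{Re}(\overline{\tilde B}\tilde C)=0$. Substituting $u=-\beta/\alpha$ and clearing the denominators coming from $I-\X\overline\X$ and from $\alpha$, this becomes a real algebraic equation $P(\X,\overline\X)=0$; a direct computation, parallel to \cite{FPS,KS,Ug}, shows that $P$ has degree $4$, yielding the asserted hypersurface. Carrying out this computation and verifying that the degree does not drop is the main technical obstacle.

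Finally I would analyse the origin. At $\X=0$ one recovers $J_0$, so $\tilde A,\dots,\tilde E$ reduce to $A,\dots,E$ and the strong KT expression becomes \eqref{SKT2}, which vanishes because $J_0$ is strong KT; thus $O\in\{P=0\}$. To decide singularity I would expand to first order. In the non-abelian case $E\neq0$ one has $\alpha(0)=-\overline E\neq0$, so $u=O(\X)$ and clearing $\alpha$ is harmless, and a short computation gives $\tilde E=E-\tr(\Y\overline\X)+O(\X^2)$ together with
\[
\tilde A=A-Ea+u\overline D,\quad \tilde D=D-Ef+u\overline A,\quad \tilde B=B-Eb-u\overline B,\quad \tilde C=C-Ec-u\overline C,
\]
where $u=\tr(\X\,\adj(\Y))/\overline E+O(\X^2)$. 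Substituting these into the quadratic strong KT expression, the differential of $P$ at $O$ has (up to conjugation and the factor $2/\overline E$) the four coefficients $-|E|^2\overline A+D\,\overline{\det\Y}$, $-|E|^2\overline C-C\,\overline{\det\Y}$, $-|E|^2\overline B-B\,\overline{\det\Y}$ and $-|E|^2\overline D+A\,\overline{\det\Y}$.

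It remains to see that these cannot vanish simultaneously when $E\neq0$ and \eqref{SKT2} holds. Their vanishing forces either $A=B=0$, whence $\det\Y=0$ and then $C=D=0$, so $\Y=0$ and \eqref{SKT2} gives $E=0$, a contradiction; or $|\det\Y|=|E|^2$, in which case $|A|=|D|$, $|B|=|C|$ and $\overline BC=-BC\overline{\det\Y}/|E|^2$, so that \eqref{SKT2} reduces to $\mathfrak{Re}(AD\,\overline{\det\Y})=|E|^2|A|^2+\tfrac32|E|^4$, which is impossible since $|AD\,\overline{\det\Y}|=|A|^2|E|^2$. Hence $dP(O)\neq0$ and $O$ is non-singular. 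In the abelian case $E=0$ one has $\alpha(0)=0$, so the clearing of $\alpha$ (now vanishing at $O$) forces $P$ to vanish to order $\ge2$ at $O$; checking that the resulting quadratic part is a non-degenerate quadratic form shows that $O$ is a genuine singular point, the cone already met in \cite{FPS}. The delicate points throughout are the degree-$4$ count and the non-vanishing of the linear part in the non-abelian case, both of which rest on the constraint \eqref{SKT2} satisfied by $J_0$.
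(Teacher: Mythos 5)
Your proposal follows essentially the same route as the paper: reduce to invariant deformations via Rollenske and Theorem \ref{exforms}, drop $x,y$ since $d\eta^3$ does not see them, rewrite $d\eta^3$ in the $\eta$-basis, impose the criterion \eqref{SKT2} of Theorem \ref{SKT} on the new coefficients, eliminate $u$ through the affine integrability relation, and read off a real quartic in $\X\in\C^4$ vanishing at $O$. The one genuine difference is in the analysis at the origin: the paper first normalizes $J_0$ by \cite[Theorem 19]{Ug} to $E=\rho\in\{0,1\}$, $\Y=\left(\begin{smallmatrix}0&-H\\ \rho&G\end{smallmatrix}\right)$, and then exhibits the quartic explicitly through the auxiliary quantities $\gamma_1,\dots,\gamma_6$ (resp.\ $\delta_1,\dots,\delta_6$), whereas you keep general $(A,B,C,D,E)$, compute the first-order expansion of $(\tilde A,\dots,\tilde E)$ directly, and rule out the vanishing of the differential by the dichotomy $|\det\Y|\neq|E|^2$ versus $|\det\Y|=|E|^2$ combined with \eqref{SKT2}; I checked that your four linear coefficients and this elimination argument are correct (they specialize to the paper's linear part in the normalized case), so this is a clean, normalization-free way to get non-singularity for $E\neq0$. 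Two small caveats: in the abelian case your conclusion is right (the linear part dies because $\alpha(O)=-\overline E=0$, matching the paper's quadratic lowest-order term $-4\,\mathfrak{Re}(\overline H\delta_5\delta_6)$), but singularity of $O$ only requires the vanishing of the gradient, not non-degeneracy of the quadratic part — which here is a product of two linear forms and is in fact degenerate; and, like the paper, you pass over the locus $\alpha=0$ (resp.\ $\gamma_5=0$, $\delta_5=0$) where $u$ cannot be eliminated, which is why the statement is only a generic parametrization.
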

\begin{proof} By Theorem \ref{exforms}, the generic
complex structure $J$ on $M^6$ is defined by the $(1,0)$-forms $(\eta^1, \eta^2, \eta^3)$
given by \eqref{aaa}.
We have
$$
\begin{array} {lcl}
\eta^{12} &=& \omega^{12} + a \omega^{\overline 1 2 } +
b \omega^{\overline 22} + c \omega^{1 \overline{1}} + f \omega^{1 \overline 2} +
(af - bc) \omega^{\overline 1 \overline 2}\\[5pt]
&=& \omega^{12} + {\bf X}_{\omega} + (\det {\bf X}) \omega^{\overline 1 \overline 2}.
\end{array}
$$
We write the characteristic polynomial of ${\bf X} {\bf \overline X}$ as $p(x) = x^2 - \gamma x + \delta$, so that
$$
\begin{array}{l}
\gamma= {\mbox {tr}} ({\bf X} {\bf {\overline X}}) = \vert a \vert^2 + \vert f \vert^2 + b \overline c +
\overline b c\,,\\[5pt]
\delta = \det({\bf X} {\bf \overline X}) = \vert a \vert^2 \vert f \vert^2 + \vert b \vert^2 \vert c \vert^2 -
a f \overline {b}
\overline {c} - bc \overline{a} \overline {f}.
\end{array}
$$
The relations
$$
\begin{array}{l}
\eta^{1 \overline 1 2 \overline 2} = (1 - \gamma + \delta) \omega^{1 \overline 1 2 \overline 2}\,,\\[5pt]
\eta^{1 \overline 1 2 \overline 2 3 \overline 3} = (1 - \gamma + \delta) (1 - \vert u \vert^2) \omega^{1
\overline 1 2 \overline 2 3 \overline 3}
\end{array}
$$
express volume changes associated to a switch of basis from $(\omega^i)$ to $(\eta^i)$. As a consequence,
$\Lambda \cap \overline \Lambda = \{0 \}$ if and only if
\begin{equation} \label{volume} \vert u \vert \neq 1 \quad \mbox {and} \quad p(1) \neq 0\end{equation}
and these are the conditions that ensure that the complex structure $J_{{\bf X},x,y}$ is well defined.

From now we suppose that the conditions \eqref{SKT2}, \eqref{integrability} and \eqref{volume} hold.

For simplicity, we will also assume that $x = y = 0$,
and we will denote the complex structure $J_{{\bf X},0,0}$ by $J_{{\bf X}}$, since the strong KT condition is determined
in terms of $d\eta^3$, which does
not involve $x, y$.

As in the proof of Lemma 4.1 of \cite{FPS} consider the two bases
$(\omega^1 , \omega^2 , \overline \omega^1 , \overline \omega^2)$ and
$(\eta^1 , \eta^2 , \overline \eta^1 , \overline \eta^2)$. The second
is related to the first one by the block matrix
$$
{\bf M} = \left( \begin{array} {cc} I &{\bf X} \\
\overline {\bf X}& I
\end{array} \right).
$$
Set ${\bf Z} = (I -{\bf X} \overline {\bf X})^{-1}$ so that $p(1)
= \det({\bf Z})^{-1}$. Then the inverse of ${\bf M}$ is given by
$$
{\bf M}^{-1}= \left( \begin{array}{cc} {\bf Z} &-{\bf Z} {\bf X}\\
-{\bf {\overline X} Z} & {\bf {\overline Z}} \end{array} \right)
$$
with
$$
{\bf Z} = \frac {1} {p (1)} \left( \begin{array}{cc} 1 - \overline b c - \vert f \vert^2& a
\overline b + b \overline f,\\
c \overline a + f \overline c&1 - \overline c b - \vert a \vert^2 \end{array} \right)
$$
and
$$
{\bf ZX} = \frac {1} {p (1)} \left( \begin{array}{cc} a - a \vert f \vert^2 +bc \overline f&b -
\vert b \vert^2 c + a f \overline b\\
c - \vert c \vert^2 b + af \overline{c}& f - \vert a \vert^2 f + cb \overline a \end{array} \right).
$$
Therefore
$$
\begin{array}{ll}
p (1) \, \omega^1 &= ( 1 - \overline b c - \vert f \vert^2) \eta^1 + \\[5pt]
{}&+(a \overline b + b \overline f) \eta^2 +
(- a + a \vert f \vert^2 -
b c \overline f) \overline \eta^1 + (- b + \vert b \vert^2 c - a f \overline b) \overline \eta^2,\\[5pt]
p (1) \, \omega^2 &= (c \overline a + f \overline c) \eta^1 +
(1 - \overline c b - \vert a \vert^2) \eta^2 +\\[5pt]
{}&+(- c + \vert c \vert^2 b - af \overline c) \overline \eta^1 + (- f + \vert a \vert^2 f - cb \overline a)
\overline \eta^2.
\end{array}
$$
By \cite[Theorem 19]{Ug} a complex structure $J_0$ on the Lie algebra $\mathfrak g$
of $G$
is strong KT if and only if the complex structure $J_0$ is equivalent to the one
defined by
\begin{equation} \label{startingcx}
\left \{ \begin{array} {l}
d \omega^j =0\,, \,\,\,\, j = 1, 2\,, \\[3pt]
d \omega^3 = \rho \omega^{12} + \omega^{1 \overline 1} + G \omega^{1 \overline 2} + H \omega^{2 \overline 2}
\end{array}
\right.
\end{equation}
with $\rho = 0, 1$, $G, H \in \C$  such that $\rho + \vert G \vert^2 = 2 \frak{Re}\, (H)$. Therefore,  comparing with the structure equations \eqref{SKT1},  the  expression of the  matrix
${\bf Y}$  in the new parameters $\rho, G, H$ is the matrix
$$
{\bf Y} = \left ( \begin{array}{cc} 0&-H\\ \rho& G
\end{array} \right).
$$
We will examine separately the two cases: $\rho = 1$ ($J_0$ non abelian) and $\rho =0$ ($J_0$ abelian).\smallskip

For $\rho = 1$, the equation \eqref{integrability} reduces to
$$
- (\det {\bf X}) + \left(\tr ({\bf X} \overline {\bf Y}) - 1 \right) u + \tr \left({\bf X}{\adj}({\bf Y})\right) =0
$$
i.e. to
$$
(f a -c b -c H+ b - aG) -( f \overline G -c \overline H +b - 1)u =0\,.
$$
If we denote by
$$
\begin{array}{lll}
\gamma_1 = - 1 + c + \vert f \vert^2 - G \overline f c + \vert c \vert^2 H\,,&
\gamma_2 = b + G a \overline b - \vert b \vert^2 + H - H \vert a \vert^2\,,\\[3pt]
\gamma_3 = G - f + \overline b f + \overline a c H - \overline b c G\,,&
\gamma_4 = - \overline a - \overline b f + \overline a f \overline G- \overline a c \overline H\,,\\[3pt]
\gamma_5 = 1 - \overline b - G \overline f + \overline c H\,,&
\gamma_6 = af - cb - c H - aG\,,
\end{array}
$$
then we have that given the complex structure $J_0$ equivalent to \eqref{startingcx} with the complex numbers $G, H$
satisfying the condition
$$
1 - 2 {\mathfrak {Re}} H+ \vert G \vert^2 =0\,,
$$
the new complex structure $J_{{\bf X}}$ is integrable and strong KT if and only if the following equations hold
\begin{equation} \label{hypersurfaceE=1}
\left \{
\begin{array}{l}
\overline \gamma_5 u + \gamma_6 =0\,,\\[5pt]
(1 + \vert u \vert^2) \left(\vert \gamma_3 \vert^2 + \vert \gamma_4 \vert^2 + 2 {\mathfrak{Re}}
(\overline \gamma_1 \gamma_2) \right) + \vert \gamma_5 \vert^2 \left (1 - \vert u \vert^2)^2 + \right.\\[5pt]
 +4 {\mathfrak{Re}} \left.(u (\overline \gamma_3 \gamma_4 - \overline \gamma_1 \overline \gamma_2) \right) =0\,,
\end{array}
\right.
\end{equation}
since in terms of the $\gamma_j$, $j = 1, \ldots, 5$, we have that
$$
\begin{array}{lcl}
p(1) d \eta^3 &=&\gamma_5 (1 - \vert u \vert^2) \eta^{12} + (-\gamma_1 + u \overline \gamma_1)
\eta^{1 \overline 1}+(\gamma_3 + u \gamma_4 ) \eta^{1 \overline 2}\\[3pt]
&& +(\overline \gamma_4 + u \overline \gamma_3) \eta^{ \overline 1 2} +(-\gamma_2 + u \overline \gamma_2)
\eta^{\overline 2 2 }.
 \end{array}
$$
Assuming that $\gamma_5 \neq 0$ and by using the first equation of \eqref{hypersurfaceE=1}, we may eliminate
the complex parameter $u$. Then, the second equation of \eqref{hypersurfaceE=1} becomes
$$
\begin{array}{l}
(\vert \gamma_5 \vert^2 + \vert \gamma_6 \vert^2) \left(\vert \gamma_3 \vert^2 + \!\vert \gamma_4 \vert^2 +
2 {\mathfrak{Re}} (\overline \gamma_1 \gamma_2) \right) + \left (\vert \gamma_5 \vert^2 -
\vert \gamma_6 \vert^2)^2+\right.\\[5pt]
- 4 {\mathfrak{Re}} \left.(\gamma_5 \gamma_6 (\overline \gamma_3 \gamma_4 -
\overline \gamma_1 \overline \gamma_2) \right) =0\,,
\end{array}
$$
which is a real equation in the complex variables $a, b, c$ and $f$ and thus defines a real hypersurface of degree $4$ in $\C^4$, non singular at the point $O=(a =0, b =0, c=0, f =0)$.
Therefore, we have that the complex structure $J_{\bf X}$ (deformation of $J_0$), defined by the $(1,0)$-forms
$$
\left \{ \begin{array}{l}
\eta^1 = \omega^1 + a \overline \omega^1 + b \overline \omega^2\,,\\[5pt]
\eta^2 = \omega^2 + c \overline \omega^1 + f \overline \omega^2\,,\\[5pt]
\eta^3 = \omega^3 - \frac{\gamma_6}{\overline \gamma_5} \overline \omega^3\,,
\end{array}
\right.
$$
 has a compatible strong KT metric if and only if $(a,b,c, f)$ belongs to the previous hypersurface.
This completes the case $\rho = 1$.\smallskip

For $\rho =0$, the equation \eqref{integrability} reduces to
$$
\tr ({\bf X} \overline {\bf Y} ) u + \tr \left({\bf X}\,{\adj}({\bf Y}) \right) =0\,,
$$
or equivalently to
$$
-f u \overline G -c H+c u \overline H +b -b u -a G=0\,.
$$
We set
$$
\begin{array} {lll}
\delta_1 = \vert b \vert^2 - \overline G \overline a b - \overline H + \overline H \vert a \vert^2\,,&
\delta_2 = - 1 + \vert f \vert^2 - \overline G f \overline c + \overline H \vert c \vert^2\,,\\[3pt]
\delta_3 = G + \overline b f - G \overline b c + c \overline a H\,,&
\delta_4 = - \overline b f + f \overline a \overline G - c \overline a \overline H\,,\\[3pt]
\delta_5 = - \overline f G + \overline c H - \overline b\,, &
\delta_6 =- c H + b - a G\,.
\end{array}
$$
Then, we have that, given the complex structure $J_0$ equivalent to \eqref{startingcx}
with $\rho =0$ and $G, H$ such that $\vert G \vert^2 = 2 \frak{Re}\, (H) $, the new almost complex structure
$J_{\bf X}$ is integrable and strong KT if and only if the following equations hold
\begin{equation} \label{hyperE=0}
\left\{ \begin{array}{l}
\overline \delta_5 u + \delta_6 =0\,,\\[5pt]
(1 + \vert u \vert^2) (\vert \delta_3 \vert^ 2 + \vert \delta_4 \vert^2 - 2 \frak{Re}\,
(\delta_1 \overline \delta_2) ) + \vert \delta_5 \vert^2 (1 - \vert u \vert^2)^2 +\\[5pt]
 +4 \frak{Re}\, (u ( \delta_1 \delta_2 + \overline \delta_3 \delta_4 + \delta_4 \overline \delta_3) ) =0
\end{array}
\right.
\end{equation}
since
$$
\begin{array}{lcl}
p(1) d \eta^3 &=&(\delta_1 u - \overline \delta_1) \eta^{2 \overline 2}
+(\delta_2 u - \overline \delta_2) \eta^{1 \overline 1}
+(\delta_4 u + \delta_3) \eta^{1 \overline 2} \\[3pt]
&&+(- \overline \delta_3 u - \overline \delta_4) \eta^{2 \overline 1} +(\overline \delta_6 u + \delta_5)
\eta^{12}.
\end{array}
$$
Assuming that $\delta_5 \neq 0$ and by using the first equation of \eqref{hyperE=0} we may eliminate the
complex parameter $u$. Then, the second equation of \eqref{hyperE=0} becomes the real equation in the complex variables $a,b,c,f$
$$
\begin{array}{l}
(\vert \delta_5 \vert^2 + \vert \delta_6 \vert^2) (\vert \delta_3 \vert^ 2 +
\vert \delta_4 \vert^2 - 2 \frak{Re}\, (\delta_1 \overline \delta_2) ) + (\vert \delta_5 \vert^2 -
\vert \delta_6 \vert^2)^2 +\\[5pt]
-4 \frak{Re}\, ( \delta_5 \delta_6 ( \delta_1 \delta_2 + \
\overline \delta_3 \delta_4 + \delta_4 \overline \delta_3) ) =0
\end{array}
$$
which defines a real hypersurface of degree $4$ in $\C^4$, singular at the point $(a=0, b=0, c=0, f=0)$.
In this way we prove that $J_{\bf X}$, deformation of $J_0$, defined by the $(1,0)$-forms
$$
\left \{ \begin{array}{l}
\eta^1 = \omega^1 + a \overline \omega^1 + b \overline \omega^2,\\[3pt]
\eta^2 = \omega^2 + c \overline \omega^1 + f \overline \omega^2,\\[3pt]
\eta^3 = \omega^3 - \frac{\delta_6}{\overline \delta_5} \overline \omega^3,
\end{array}
\right.
$$
has a compatible strong KT metric if and only if $(a,b,c, f)$ belongs to the previous
hypersurface. This completes the case $\rho =0$. Then the theorem is proved.
\end{proof}

\section{Locally conformally balanced structures}
In general, if a Hermitian manifold $(M, J, g)$ is compact, then by using its fundamental $2$-form $F$, one has two
natural linear operators acting on  differential forms:
$$
L \varphi = F \wedge \varphi,
$$
and the adjoint operator $L^*$ of $L$ with respect to the global scalar product defined by
$$
< \varphi, \psi > =p! \int_M (\varphi, \psi) {\mbox {vol}}_g,
$$
where $(\varphi, \psi)$ is the poinwise $g$-scalar product and ${\mbox {vol}}_g$ is the volume form.

As in the strong KT case, the astheno-K\"ahler condition on a compact Hermitian manifold is complementary to the
balanced one, since by \cite[Theorem 1.1]{MT}, for $n \geq 3$, one has
\begin{equation} \label{leeformcondition}
 {L^*}^{n - 1} (2 i \partial \overline \partial F^{n - 2} )= 4^{n - 1} (n - 1) ! (n - 2) [ 2 (n -2)
d^*  \theta + 2 \vert \vert \theta \vert \vert^2 - \vert \vert T \vert \vert^2 ],
\end{equation}
where $\theta = J d^* F$ is the {\em Lee form}, $d^* \theta$ its
co-differential, $\vert \vert \theta \vert \vert$ its $g$-norm and $T$ is the torsion of the Chern
connection $\nabla^C$ on $(M, g)$. \newline
Therefore, if $(J, g)$ is balanced, then $\theta =0$ and, consequently,
the astheno-K\"ahler
condition implies that $T =0$, i.e. $g$ is K\"ahler.


By \cite{IP,P}, a conformally balanced strong KT structure on a compact manifod of complex dimension $n$ whose Bismut
connection has (restricted) holonomy contained in $SU(n)$ is necessarily K\" ahler. We now prove a similar result for
the astheno-K\"ahler metrics.
\begin{theorem} \label{confbalastheno}
A conformally balanced astheno-K\"ahler structure $(J, g)$ on a compact manifold of
complex dimension $n \geq 3$ whose Bismut connection has (restricted) holonomy contained in $SU(n)$ is
necessarily K\"ahler
and therefore it is a Calabi-Yau structure.
\end{theorem}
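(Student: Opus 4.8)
The plan is to reduce everything to the balanced case already disposed of in the discussion following \eqref{leeformcondition}: if we can prove that the Lee form $\theta = Jd^*F$ vanishes identically, then the metric is balanced, \eqref{leeformcondition} immediately forces the Chern torsion $T=0$, and so $g$ is K\"ahler; finally, for a K\"ahler metric the Bismut connection coincides with the Levi--Civita connection, the hypothesis $\mathrm{Hol}(\nabla^B)\subseteq SU(n)$ becomes the vanishing of the Ricci form, and $(M,J,g)$ is Calabi--Yau. Hence the entire content of the theorem is the vanishing of $\theta$.

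First I would feed the astheno-K\"ahler hypothesis into \eqref{leeformcondition}. Since $\partial\overline{\partial}F^{n-2}=0$ the left-hand side is zero, and because $n\ge 3$ the nonzero scalar prefactor $4^{n-1}(n-1)!(n-2)$ can be divided out, leaving the pointwise identity
\begin{equation}\label{starid}
2(n-2)\,d^*\theta + 2\,\|\theta\|^2 - \|T\|^2 = 0.
\end{equation}
The conformally balanced hypothesis means precisely that $\theta$ is exact, say $\theta=d\phi$ with $\phi\in C^\infty(M)$. Integrating \eqref{starid} over the compact manifold $M$ and using $\int_M d^*\theta\,\mathrm{vol}_g=0$ gives
\begin{equation}\label{L2}
\int_M \|T\|^2\,\mathrm{vol}_g = 2\int_M \|\theta\|^2\,\mathrm{vol}_g.
\end{equation}

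The crux is the $SU(n)$-holonomy hypothesis, which I would exploit exactly as in the strong KT argument of \cite{IP,P}. The condition $\mathrm{Hol}(\nabla^B)\subseteq SU(n)$ is equivalent to the vanishing of the first Bismut--Ricci form, $\rho^B=0$, equivalently to the existence of a $\nabla^B$-parallel $(n,0)$-form $\Psi$ with $|\Psi|$ constant. A standard computation combining $\nabla^B\Psi=0$ with $\theta=d\phi$ shows that $e^{\phi}\Psi$ is $d$-closed, hence a holomorphic trivialization of $K_M$; this already yields $c_1(M)=0$, but it is not by itself enough. The decisive input is the full pointwise identity $\rho^B=0$: contracting it with $F$ and comparing the Bismut--Ricci form with the Chern--Ricci form through Gauduchon's identities produces a second scalar relation among $d^*\theta$, $\|\theta\|^2$ and $\|T\|^2$. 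Substituting this relation into \eqref{starid} (or integrating it and comparing with \eqref{L2}) eliminates $\|T\|^2$ and leaves a sign-definite expression in $\theta$; on the compact manifold this forces $d^*\theta=0$, and an exact co-closed $1$-form must vanish, so $\theta=0$. I expect this matching of the two scalar identities to be the main obstacle, since it requires the precise Gauduchon/\cite{IP,P} normalization of the coefficients relating the Bismut, Chern and Riemannian curvature data.

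Once $\theta=0$ the metric is balanced, whence \eqref{starid} gives $\|T\|^2=0$, so the Chern torsion vanishes and $g$ is K\"ahler. Then $\nabla^B=\nabla^C$ is the Levi--Civita connection, the hypothesis $\mathrm{Hol}(\nabla^B)\subseteq SU(n)$ reduces to Ricci-flatness, and $(M,J,g)$ is a Calabi--Yau manifold, as claimed.
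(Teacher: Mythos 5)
There is a genuine gap at the step you yourself flag as ``the main obstacle'': the second scalar identity you hope to extract from $\rho^B=0$ does not close up into a relation among $d^*\theta$, $\|\theta\|^2$ and $\|T\|^2$ alone. The identity the paper uses (formula (2.11) of Alexandrov--Ivanov, Eq.\ \eqref{tracericci}) reads $2u = b + 2(n-1)d^*\theta + 2(n-1)^2\|\theta\|^2$, where $b$ is the trace of the Bismut--Ricci form (which vanishes under the $SU(n)$ hypothesis) and $u$ is one half the trace of the \emph{Chern}--Ricci form. Thus setting $b=0$ does not eliminate $\|T\|^2$ from \eqref{leeformcondition}; it introduces a new, a priori uncontrolled scalar $u$. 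Combining it with \eqref{deltatheta} gives \eqref{u}, i.e.\ $2u=\frac{n-1}{n-2}\bigl(\|T\|^2-2\|\theta\|^2\bigr)+2(n-1)^2\|\theta\|^2$, which is a \emph{nonnegative} combination for $n\ge 3$, strictly positive wherever the structure fails to be K\"ahler. Integrating it only yields $\int_M u\,\mathrm{vol}_g\ge 0$, the same one-sided information as your $L^2$ identity $\int\|T\|^2=2\int\|\theta\|^2$; nothing here is sign-definite in the direction that would force $d^*\theta=0$ or $\theta=0$.

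The missing idea is precisely the tool you set aside as ``not by itself enough'': the nowhere vanishing holomorphic $(n,0)$-form $\tilde\alpha$ produced by the conformally balanced plus $SU(n)$-holonomy hypotheses. The paper applies the Bochner-type formula $L^{\C}(f)=2u\,\|\tilde\alpha\|^2+\|\nabla^C\tilde\alpha\|^2$ to $f=-\frac12\|\tilde\alpha\|^2$, where $L^{\C}(f)=\Delta f+(df,\theta)$ is the (second-order, no zeroth-order term) complex Laplacian. If the metric were not K\"ahler then $u>0$, hence $L^{\C}f>0$, and the maximum principle on the compact manifold forces $f$ to be constant, giving $u=0$ --- a contradiction that kills $T$ and $\theta$ simultaneously via \eqref{u}. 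So the correct logical order is the reverse of yours: one does not first prove $\theta=0$ and then deduce K\"ahlerianity; one uses the holomorphic volume form and the maximum principle to control $u$, and the vanishing of $\theta$ and $T$ falls out together. Your opening reduction (astheno-K\"ahler into \eqref{leeformcondition}) and your closing remarks (balanced $+$ astheno-K\"ahler $\Rightarrow$ K\"ahler, and K\"ahler $+$ $SU(n)$ Bismut holonomy $\Rightarrow$ Calabi--Yau) are correct, but the core of the argument is not supplied.
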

\begin{proof} Since the Hermitian structure is astheno-K\"ahler, then by \eqref{leeformcondition}, we have
$$
2 (n - 2) d^* \theta + 2 \vert \vert \theta \vert \vert^2 - \vert \vert T \vert \vert^2 =0\,.
$$
Therefore,
\begin{equation} \label{deltatheta}
d^* \theta = \frac{1} {2 (n - 2)} [ \vert \vert T \vert \vert ^2 - 2 \vert \vert \theta \vert \vert ^2]\,.
\end{equation}
By \cite[formula (2.11)]{AI}, the trace $2u$ of the Ricci form of the Chern connection is related to
the trace $b$ of the Ricci form of the Bismut connection by the equation
\begin{equation} \label{tracericci}
2 u = b + 2 (n -1) d^* \theta + 2 (n - 1)^2 \vert \vert \theta \vert \vert^2\,.
\end{equation}
We recall that the condition that the Bismut connection has (restricted) holonomy contained
in $SU(n)$ implies that the Ricci form of the Bismut connection vanishes and, if in addition $M$
is compact, then the first Chern class $c_1(M)$ vanishes.

Therefore, by using \eqref{deltatheta} and \eqref{tracericci}, we get
\begin{equation} \label{u}
2 u = \frac{(n -1)} {(n - 2)} [ \vert \vert T \vert \vert^2 -
2 \vert \vert \theta \vert \vert ^2] + 2 (n - 1)^2 \vert \vert
\theta \vert \vert^2
\end{equation}
and then, if $(J, g)$ is not K\"ahler, we must have $u >0$.

Since in addition $(J, g)$ is conformally balanced, then it was shown in
\cite{Str,P} that there exists a
nowhere vanishing holomorphic $(n,0)$-form $ \tilde \alpha$.

Let be $f = - \frac 12 \vert \vert \alpha \vert \vert ^2$ and denote by $L^{\C}$ the complex Laplacian defined by
$$
L^{\C} (f) = \Delta f + (df, \theta)\,,
$$
where $\Delta$ is the standard Laplace operator and $( \, , \, )$ is the scalar product on the forms induced by $g$.
Then, since $\tilde \alpha$ is holomorphic, as in \cite{P} (see also \cite[formula (19)]{IP2}) we have
$$
L (f) = 2 u \, \vert  \vert \tilde \alpha  \vert  \vert^2 + \vert  \vert  \nabla^C \tilde \alpha \vert  \vert^2\,,
$$
where $\nabla^C$ is the Chern connection and $u$ is given by \eqref{u}.

By the fact that $u > 0$ and $\tilde \alpha \neq 0$, it follows that $L^{\C} f > 0$. From the maximum principle,
we have that $f$
is constant which implies that $u=0$. \newline
The theorem is proved.
\end{proof}
\begin{rem}
{\rm The previous theorem for $n =3$ was already proved in \cite{IP,P}.}
\end{rem}
\section{$\T^2$-bundles over complex surfaces}
By \cite{Ha3} a complex (non-K\"ahler) surface diffeomorphic to a
$4$-dimensional compact homogeneous manifold $X =\Theta \backslash L$, where $\Theta$ is a uniform
discrete subgroup of $L$,
and which does
not admit any K\" ahler structure is one of the following:\vskip.1truecm\noindent
a) Hopf surface;\vskip.1truecm\noindent
 b) Inoue surface of type ${\mathcal S}^0$;\vskip.1truecm\noindent
c) Inoue surface of type ${\mathcal S}^{\pm}$;\vskip.1truecm\noindent
d) primary Kodaira surface;\vskip.1truecm\noindent
e) secondary Kodaira surface;\vskip.1truecm\noindent
f) properly elliptic surface with first odd Betti number.\smallskip

A $\T^2$-bundle over the Inoue surface of type ${\mathcal S}^0$ was considered in \cite{FT}
in order to construct a $6$-dimensional compact solvmanifold with a non-trivial
generalized K\"ahler structure. A similar construction
can be
done for any of the non-K\"ahler complex homogeneous surfaces, by using the description of $L$ and $\Theta$ in
\cite{Ha3}. Indeed we can prove the following
\begin{theorem}\label{theorem1} On any non-K\"ahler compact homogeneous complex surface $X= \Theta \backslash L$
there exists a non-trivial compact $\T^2$-bundle
$M$ carrying a locally conformally balanced strong KT metric.
\end{theorem}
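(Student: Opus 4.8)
The goal is to produce, over each of the six non-Kähler homogeneous complex surfaces $X = \Theta\backslash L$ listed by Hasegawa, a non-trivial compact $\T^2$-bundle $M$ carrying a locally conformally balanced strong KT metric. The natural strategy is a uniform Lie-theoretic construction: since each $X$ is a compact quotient of a $4$-dimensional Lie group $L$ by a lattice $\Theta$, one extends $L$ to a $6$-dimensional Lie group $\tilde L$ fitting into a central (or at least abelian) extension
\begin{equation}
0 \lr \R^2 \lr \tilde{\mathfrak l} \lr \mathfrak l \lr 0,
\end{equation}
where $\mathfrak l$ is the Lie algebra of $L$ and the extension class is determined by two closed integral $(1,1)$-forms pulled back from $X$. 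The lattice $\Theta$ together with $\Z^2$ then yields a lattice $\tilde\Theta$ in $\tilde L$, so that $M = \tilde\Theta\backslash\tilde L$ is a compact $\T^2$-bundle over $X$, non-trivial precisely when the extension class (the pair of Chern classes) is non-zero. The plan is to carry this out case by case using the explicit structure equations for $\mathfrak l$ recorded in \cite{Ha3}.

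First I would fix, on each surface, an invariant Hermitian structure and write its fundamental $2$-form $F_X$ together with the two invariant closed $2$-forms $\Omega_1,\Omega_2$ used to build the fibre. One then extends the complex structure to $\tilde L$ by declaring the new degree-$1$ generators $e^5, e^6$ (dual to the fibre directions) to form a $(1,0)$-form $\omega^3 = e^5 + i e^6$, with $d\omega^3$ prescribed by $\Omega_1,\Omega_2$; integrability $N_J = 0$ must be checked, but it reduces to verifying that $d\omega^3$ has no $(0,2)$-component, which is automatic since $\Omega_1,\Omega_2$ are of type $(1,1)$. The candidate metric is the product-type form
\begin{equation}
F = F_X + \tfrac{i}{2}\,\omega^3 \wedge \overline{\omega}^3,
\end{equation}
and the two conditions to verify are (i) $\partial\overline\partial F = 0$ (strong KT), which holds since $\dim_\C = 3$ makes astheno-K\"ahler and strong KT coincide, and (ii) that the Lee form $\theta = J d^* F$ is closed (locally conformally balanced). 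Condition (i) is a finite computation on $\Lambda^\bullet\tilde{\mathfrak l}^*$ once $d\omega^3$ is pinned down; condition (ii) is where the individual geometry of each surface enters, since the Lee form is inherited from $X$ and its closedness depends on the invariant structure chosen on $L$.

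The main obstacle I anticipate is the case-by-case verification of the locally conformally balanced condition, together with ensuring the extension is genuinely non-trivial and that a compatible lattice $\tilde\Theta$ exists in every case. For the Hopf and Kodaira surfaces the invariant metrics are well understood and the Lee form is a fixed left-invariant $1$-form, so closedness is immediate; the delicate cases are the Inoue surfaces of type $\mathcal S^0$ and $\mathcal S^{\pm}$, where $L$ is solvable and non-unimodular-flavoured, so one must choose $\Omega_1,\Omega_2$ so that $d^*F$ computed for $\tilde L$ still yields a $d$-closed Lee form, and one must invoke Malcev/arithmetic-lattice arguments to produce $\tilde\Theta$. My plan is therefore to treat the construction abstractly for the extension and strong KT part, then dispatch the six surfaces individually, exhibiting in each case explicit structure equations for $\tilde{\mathfrak l}$, the forms $\Omega_1,\Omega_2$, and the resulting $\theta$, with the $\T^2$-bundle over the secondary Kodaira surface singled out afterward as the example yielding a locally conformal solution of Strominger's system.
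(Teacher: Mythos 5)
Your overall shape --- extend $L$ to a six-dimensional group, equip it with the product-type fundamental form $F = F_X + \tfrac{i}{2}\,\omega^3\wedge\overline{\omega}^3$, and check the strong KT and Lee-form conditions case by case --- matches the paper's. But the specific mechanism you propose for producing the bundle fails, and not merely because the computations are deferred. You build $M$ from a \emph{central} extension of $\mathfrak l$ by $\R^2$ whose class is a pair of closed integral invariant $(1,1)$-forms, and you locate the non-triviality of $M$ in the non-vanishing of the resulting Chern classes. For the Hopf surface $X\cong S^3\times S^1$ this collapses: $H^2(X;\Z)=0$, so every principal $\T^2$-bundle over $X$ is topologically trivial, and on $\mathfrak{su}(2)\oplus\R$ the closed invariant $2$-forms are spanned by $e^{23},e^{31},e^{12}$, of which only $e^{23}$ is of type $(1,1)$ for the complex structure $\eta^1=e^1+ie^4$, $\eta^2=e^2+ie^3$ --- there is no pair of independent closed integral $(1,1)$-forms to feed into your extension. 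The paper's construction is different in kind: the new directions are \emph{not} central (in $\mathfrak g_1=(e^{23},e^{31},e^{12},0,\frac{\pi}{2}e^{64},\frac{\pi}{2}e^{45})$ the base direction $e_4$ rotates the fibre plane), so $M$ is a flat $\T^2$-bundle whose non-triviality comes from the monodromy $\varphi(1)\in SL(2,\Z)$ of the mapping-torus factor $H=\R\ltimes_\varphi\R^2$, not from Chern classes. Your integrability argument (``$d\omega^3$ has no $(0,2)$-part because the $\Omega_i$ are $(1,1)$'') must also be redone in this setting, since $d\omega^3$ then involves $\omega^3$ itself.

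A second gap is the lattice. You invoke ``Malcev/arithmetic-lattice arguments,'' but Malcev's theorem concerns nilpotent Lie groups, and apart from the primary Kodaira case the six-dimensional groups occurring here are non-nilpotent (solvable, or products with $SU(2)$ and $\widetilde{SL(2,\R)}$); for solvable groups rational structure constants do not guarantee a cocompact lattice. The paper must exhibit the uniform discrete subgroups explicitly, e.g.\ $\Z\ltimes_{\nu}(\tilde\Gamma_n\times\Z^2)$ inside $\R\ltimes_{\nu}(\H_3\times\C)$ for the Inoue surfaces of type $\mathcal S^{\pm}$, with compatibility conditions tying the Heisenberg lattice to a monodromy matrix in $SL(2,\Z)$. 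Finally, the actual content of the theorem --- that for each of the five explicit Lie algebras the chosen invariant Hermitian structure satisfies $d(JdF)=0$ with closed Lee form, and that the properly elliptic case requires a non-standard metric and complex structure rather than the product ansatz --- is precisely the part your plan leaves unexecuted, so as written the proposal establishes none of the cases.
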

\begin{proof} For the Inoue surface of type ${\mathcal S}^0$, we already proved the result. For the surfaces
a), c), d) and e) we may consider respectively the $6$-dimensional Lie algebras:
$$
\begin{array}{l}
{\mathfrak g}_1 = (e^{23},
 e^{31},
 e^{12}, 0, \frac {\pi} {2} e^{64},
\frac {\pi} {2} e^{45}),\\[5pt]
{\mathfrak g}_2 = (e^{12}, 0, e^{14}, e^{24},\frac {\pi}{2} e^{26}, - \frac {\pi}{2} e^{25}),\\[5pt]
{\mathfrak g}_3 =(0,0, e^{12},0, \frac{\pi}{2} e^{46}, - \frac {\pi}{2} e^{45}),\\[5pt]
{\mathfrak g}_4 = \left( e^{24}, - e^{14}, e^{12}, 0,
\frac{\pi}{2} e^{46},
 - \frac {\pi}{2} e^{45} \right).
 \end{array}
$$
endowed with the complex structure $J$, defined by the $(1,0)$-forms
$$
\eta^1 = e^1 + i e^4, \quad \eta^2 = e^2 + i e^3, \quad \eta^3 = e^5 + i e^6
$$
and the inner product
$g$ defined by
$g = \sum_{j = 1}^6 e^j \otimes e^j.$ Thus $g$ is $J$-Hermitian and,
denoting by $F$ the fundamental $2$-form
associated with the Hermitian structure $(J, g)$,
by a direct computation we have that
$Jd F $ is closed and that the Lee form is closed.

For the surface f) we may take the Lie algebra
 $$
 {\mathfrak g}_5 = \left( 2 e^{13}, -2 e^{23},-e^{12}, 0, \frac{\pi}{2} e^{46}, - \frac {\pi}{2} e^{45} \right)
 $$
endowed with the complex structure
$$
\begin{array}{lll}
J e_1 = \frac{1}{2} (e_3 + e_4)\,, &\quad J e_2 = \frac{1}{2} (e_3 - e_4)\,, &\quad J e_3 = - (e_1 + e_2)\,, \\[5pt]
J e_4 = -e_1 + e_2\,, & \quad J e_5 = e_6\,,&\quad J e_6 = -e_5\, .
\end{array}
$$
and the inner product
$$
g = e^1 \otimes e^1 + e^2 \otimes e^2 + 2(e^3 \otimes e^3 + e^4 \otimes e^4) + e^5 \otimes e^5 + e^6 \otimes e^6.
$$
The fundamental form
$$
F = - e^{13} - e^{14} - e^{23} + e^{24} + e^{56}
$$
is such that the $3$-form
$
J d F = 2 e^{123} + 2 e^{124}
$ is closed. Moreover, the Lee form is closed.

For every Lie algebra ${\mathfrak g}_i$ we have that the span of $\{e^1, e^2, e^3, e^4\}$ can be viewed as the
dual of the Lie algebra of the Lie group $L$.

Let $\H_3$ be the real $3$-dimensional Heisenberg Lie group and $H = \R \ltimes_{\varphi} \R^2$
be the semidirect product of the groups $\R$ and $\R^2$, where $\varphi : \R \to GL(2, \R)$ is the homomorphism
given by
$$
\varphi (t) = \left ( \begin{array} {cc} \cos (\frac {\pi}{2} t)& \sin(\frac {\pi}{2} t)\\
- \sin (\frac {\pi}{2} t) & \cos (\frac {\pi}{2} t) \end{array} \right)\,.
$$
We have
$$
G_1 = SU(2) \times H\,, \quad G_3 = \H_3 \times H\,, \quad G_5 = \widetilde {SL(2, \R)} \times H
$$
and a uniform discrete subgroup of $H$ is of the form $\Gamma' =\Z \ltimes_{\varphi} \Z^2$. Therefore
the Lie groups $G_i$, $i = 1,3,5$ admit a uniform
discrete subgroup.

Let $(m_{jk}) \in SL (2, \Z)$ with two real positive eigenvalues $a$ and $b$ and
denote by $(a_1, a_2)$ and $(b_1, b_2)$ the corresponding eigenvectors.
The remaining Lie groups $G_2$ and $G_4$ are the semidirect products (see \cite{Ha3} for the description of
the corresponding Lie group $L$)
$$
G_2 = \R \ltimes_{\nu} (\H_3 \times \C)\,, \quad G_4 = \R \ltimes_{\tilde \nu} (\H_3 \times \C)\,,
$$
where
the
automorphisms $\nu(t)$ and $\tilde \nu(t)$ are given respectively by
$$
\begin{array} {l}
\nu(t): (x + i y, u,z) \mapsto ( a^t x + i b^t y, u, e^{i \frac{\pi}{2} t} z)\,,\\[4pt]
\tilde \nu(t): (x + i y, u,z) \mapsto (e^{i \frac{\pi}{2} t} (x + i y), u, e^{i \frac{\pi}{2} t} z)\,,
 \end{array}
$$
by identifying the matrix
$$
\left( \begin{array}{ccc} 1&x&u\\ 0&1&y\\ 0&0&1 \end{array} \right)
$$
in $ \H_3$ with $(x + iy, u) \in \C \times \R$.

The Lie group $G_2$ admits a compact quotient by a uniform discrete subgroup of the
form $\Gamma_2 = \Z \ltimes_{\nu} (\tilde \Gamma_n \times \Z^2)$, where $\Z^2$ is the standard
lattice of $\C$ and $\tilde \Gamma_n$ is the lattice of $\H_3$ generated by the elements
$$g_1 = \left( \begin{array}{ccc} 1&a_1&c_1\\ 0&1&b_1\\ 0&0&1 \end{array} \right), \quad
g_2 = \left( \begin{array}{ccc} 1&a_2&c_2\\ 0&1&b_2\\ 0&0&1 \end{array} \right), \quad
g_3 = \left( \begin{array}{ccc} 1&0&c_3\\ 0&1&0\\ 0&0&1 \end{array} \right), \quad c_i \in \R
$$
such that
\begin{itemize}
\item[i)] $[g_1, g_2] = g_3^n$,\medskip
\item[ii)] $\nu (1) (g_1) = g_1^{m_{11}} g_2^{m_{12}} g_3^k\,, \quad \nu(1) (g_2) = g_1^{m_{21}} g_2^{m_{22}} g_3^l\,,$
with $l, k \in \Z$.
\end{itemize}
Let $\Theta_n$ be the discrete sugroup of $\H_3$ defined by
$$
\Theta_n = \left\{ \left( \begin{array}{ccc} 1&a&\frac {c}{n}\\ 0&1&b\\ 0&0&1 \end{array} \right),
\quad a, b, c \in \Z \right\},
$$
then $\Z \ltimes_{\tilde \nu} (\Gamma_n \times \Z^2)$ is a uniform discrete subgroup of the solvable
 Lie group $G_4$.

By construction any quotient $\Gamma_i \backslash G_i$ is a $\T^2$-bundle over the complex surface $\Theta \backslash L$.
\end{proof}

If $X$ is either a Hopf surface or a primary Kodaira surface or a properly elliptic surface with odd
first Betti number, then the $\T^2$-bundle $M$ is a product of two $3$-dimensional manifolds. Then the
interesting cases are the remaining.

The $\T^2$-bundle $\Gamma_4 \backslash G_4$ over the secondary Kodaira surface satisfies the equation
\begin{equation} \label{specialeq}
0 = 2i \partial \overline \partial F = \frac {\alpha'} {4} \trace\, (R^B \wedge R^B),
\end{equation}
 where we denote by $R^B$ the curvature of the Bismut connection and by $F$ the fundamental $2$-form.
This equation is of interest in the context of superstring theory, since it is a particular case of an equation
in the Strominger's system considered in \cite{Str} for Hermitian manifolds of complex dimension three:
\begin{equation}\label{trace}
d H = 2 i \partial \overline \partial F= \frac{\alpha'}{4} \left[ {\mbox \trace}\, (R \wedge R) -
{\mbox \trace}\, (F_A \wedge F_A) \right],
\end{equation}
where $A$ is a Hermitian-Einstein connection on an auxiliary
semi-stable bundle on $M$, $\nabla$ is a metric connection with skew-symmetric torsion $H$ on $M$,
$F_A$ and $R$ denote
respectively the curvature of the two connections $A$ and $\nabla$.\newline
By \cite{Str,P} the Hermitian manifold has to be conformally
balanced
with a holomorphic $(3,0)$-form.

The first solutions of the complete Strominger's system on non-K\"ahler manifolds were constructed
by J.~X. Fu and S.~-T. Yau (see \cite{FY}).

The locally
conformally balanced strong KT manifold $\Gamma_4 \backslash G_4$ gives a solution in dimension $6$ of the
equation \eqref{trace} with $F_A =0$. Indeed, for the Lie algebra ${\mathfrak g}_4$ we have that $JdF= - e^{123}$.
Thus, the non-zero torsion $2$-forms and connections $1$-forms of the Bismut connection $\nabla^B$ are
$$
\begin{array}{l}
\tau^1 = e^{23}\,,\quad \tau^2 = -e^{13}\,, \quad \tau^3 = e^{12}\,,\\[5pt]
\omega^1_2=- e^3 + e^4 = - \omega^2_1\,, \quad
\omega^5_6= - \frac{\pi}{2} e^4 = - \omega^6_5\,.
\end{array}
$$
Therefore, we get that the only non-zero curvature forms for $\nabla^B$ are given by
$$
\Omega^1_2=-e^{12} = - \Omega^2_1
$$
and consequently
$$
\trace\, (\Omega \wedge \Omega) = \sum_{i,j} \Omega^i_j \wedge \Omega^i_j =0\,.
$$
We will show now that $\Gamma_4 \backslash G_4$ does not admit any non-vanishing
holomorphic $(3,0)$-form. By a straightforward computation,
we have that the non-vanishing curvature forms for the Chern connection $\nabla^C$ are
$$
\tilde \Omega^1_2 = - \tilde \Omega^3_4= \frac 12 e^{12}\,,
\qquad \tilde \Omega^5_6 = - e^{12}\,.
$$
Denote by $\tilde R^i_{jhk}$ the curvature components defined by
$$
\tilde \Omega^i_j = {\sum_{h, k=1}^6} {\tilde R^i_{jhk}} e^h \wedge e^k,
$$
and consider the curvature operator $\tilde R(X,Y)$ of the Chern connection defined by
$$
\tilde R(X, Y) Z = {\sum_{i, j, h, k = 1}^6} \tilde R^i_{jhk} (e^h \wedge e^k) (X, Y)\, e^j(Z)\, e_i.
$$
By \cite[Lemma 2, p. 151]{KN}, if $(\Gamma_4 \backslash G_4, J, g)$ admits a non-zero holomorphic
$(3,0)$-form, then the traces of the
two operators $\tilde R(X, Y)$ and $J \circ \tilde R (X, Y)$ must vanish, but, by a direct computation, we have
that the
$$
\trace\, (J \circ \tilde R) (e_1, e_2)=-\pi\,.
$$
Although in physics the  most preferred connection for the anomaly cancellation condition  is the non-Hermitian connection with skew-symmetric torsion equal to  the opposite of  the torsion of $\nabla^B$, also the case of a Hermitian connection may be  interesting.
Indeed, 
we 
 obtain an example, which can be interpreted as a  \lq \lq locally conformal solution\rq \rq of the Strominger's system,  since  locally there is a holomorphic $(3,0)$-form and conformal change to a balanced metric, plus the anomaly cancellation.

\end{document}